\newtheorem{theorem}{Theorem}[section]
\newtheorem{lemma}{Lemma}[section]
\newtheorem{prop}{Proposition}[section]
\newtheorem{df}{Definition}[section]
\newtheorem{question}{Question}[section]
\newtheorem{stmt}{Statement}[section]
\begin{document}
%\listoftodos
\title{The Schur-Horn theorem for operators with finite spectrum}
\author{B V Rajarama Bhat, Mohan Ravichandran}
\date{}

\maketitle
%\tableofcontents
\abstract{The carpenter problem in the context of $II_1$ factors, formulated by Kadison asks: Let $\mathcal{A} \subset \mathcal{M}$ be a masa in a type $II_1$ factor and let $E$ be the normal conditional expectation from $\mathcal{M}$ onto $\mathcal{A}$. Then, is it true that for every positive contraction $A$ in $\mathcal{A}$, there is a projection $P$ in $\mathcal{M}$ such that $E(P) = A$?  In this note, we show that this is true if $A$ has finite spectrum. We will then use this result to prove an exact Schur-Horn theorem for (positive)operators with finite spectrum and an approximate Schur-Horn theorem for general (positive)operators. 
\section{Introduction}

Let $\mathcal{A}$ be a masa in a $II_1$ factor $\mathcal{M}$ and $E$ the normal conditional expectation from $\mathcal{M}$ to $\mathcal{A}$. Kadison, in \cite{KadPyt1} asked the following question,  
\begin{question}[Kadison's carpenter problem]
 Given any positive contraction $B$ in $\mathcal{A}$, does there exist a projection $P$ in $\mathcal{M}$ so that $E(P) = B$? 
\end{question}
We will denote the above problem as asking if positive contractions in masas can be lifted to projections. We refer the reader to the above cited paper for the discussion leading up to this problem. The best result to date is the result of \cite{DFHS} that says the following
\begin{prop}[Dykema, Fang, Hadwin, Smith] 
Any positive contraction in a generator masa in $L(F_{2})$ can be lifted to a projection. Also, for any positive contraction $B$ in a Cartan masa $\mathcal{A}$ in the hyperfinite $II_1$ factor $\mathcal{R}$, there is an automorphism $\theta$ of $\mathcal{A}$ so that $\theta(B)$ can be lifted to a projection. 
\end{prop}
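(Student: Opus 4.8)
The two assertions live in structurally quite different factors and I would treat them by separate arguments. For the generator masa, identify $L(F_2)=W^*(u)\ast W^*(v)$ with $u,v$ free Haar unitaries and $\mathcal{A}=W^*(u)$, so that $L(F_2)$ is a free product $\mathcal{A}\ast L(\mathbb{Z})$ and the conditional expectation $E$ onto $\mathcal{A}$ kills every reduced word containing a letter from $W^*(v)$, while $E(v^*av)=\tau(a)1$ for $a\in\mathcal{A}$. For the hyperfinite case, use the tensor model $\mathcal{R}=\overline{\bigotimes}_{n\ge1}M_2(\mathbb{C})$ with Cartan masa $\mathcal{A}=\overline{\bigotimes}_{n\ge1}D_2$; then $\mathcal{A}\cong L^\infty(\{0,1\}^{\mathbb{N}},\mu)$ for the uniform product measure, $\mathrm{Aut}(\mathcal{A})$ contains every measure-preserving transformation of the Cantor set, and the expectation $E_n$ onto the first $n$ tensor factors replaces a positive contraction by a $2^n$-valued approximant of the same trace.

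For the free case the plan is to produce $P$ explicitly from $u$ and the single free Haar unitary $v$. Writing $B=f(u)$ and $c=(f(u)(1-f(u)))^{1/2}\in\mathcal{A}$, one first notes that naive low-degree guesses collapse: for $P=B+vc+cv^*$ the expansion of $P^2$ produces the reduced word $vc^2v^*$, which has no partner on the other side of $P=P^2$, forcing $c^2$ to be a scalar, i.e.\ $B$ to be a projection already. The genuine construction must therefore use the diffuse free copy $W^*(v)$ far more delicately — say, building a sequence $P_n=P_n^*\in\mathcal{A}\ast W^*(v)$ with $E(P_n)=B$ for every $n$ and $\|P_n-P_n^2\|_2\to0$, each step repairing the previous self-commutator without disturbing the expectation, and then passing to a strong limit using normality of $E$. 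The technical heart, and the main obstacle, is to show that this single free copy really does supply enough room to carry out every successive correction.

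For the hyperfinite case the obstruction is arithmetic: one would like at stage $n$ to realize $E_n(B)$, a diagonal with $2^n$ entries of total trace $2^n\tau(B)$, as the diagonal of a projection inside $M_2^{\otimes n}$, but such a projection has trace $k2^{-n}$ with $k\in\mathbb{Z}$ while $\tau(B)$ is generically not dyadic, so no finite stage suffices. The remedy is a running construction: at stage $n$ split off an honest finite-dimensional projection realizing the integer part of the required diagonal and push the leftover — a positive contraction of trace below $2^{-n}$ supported on one atom of the $n$-th partition — into the tail $\overline{\bigotimes}_{m>n}M_2$, iterating so the residuals tend to $0$ in $L^2$ and the partial projections converge in $\mathcal{R}$. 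For this bookkeeping to close one needs the values of $B$ arranged so that the successive partial sums pass the dyadic levels $k2^{-n}$ monotonically and in a controlled way; this is not automatic, and it is exactly here that one applies a measure-preserving rearrangement $\theta$ of the Cantor set (an automorphism of $\mathcal{A}$), after which the construction converges to a genuine projection $P\in\mathcal{R}$ with $E(P)=\theta(B)$. The crux, and the reason only $\theta(B)$ rather than $B$ itself is obtained, is precisely this dyadic mismatch between $\tau(B)$ and the projection traces available at finite stages; determining which $B$ need no rearrangement is the subtle point.
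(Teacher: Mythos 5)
First, a point of context: the paper does not prove this proposition at all --- it is quoted in the introduction as a known result of Dykema, Fang, Hadwin and Smith, with the proof residing entirely in the cited reference. So there is no internal argument to compare yours against, and your attempt must be judged on its own. On those terms it is a plan rather than a proof, because it defers exactly the two steps that carry all of the content. In the $L(F_2)$ case, after correctly observing that the naive ansatz $P=B+vc+cv^{*}$ forces $B$ to be a projection already, you posit a sequence $P_n=P_n^{*}$ with $E(P_n)=B$ and $\|P_n-P_n^2\|_2\to 0$, and then state that showing the single free copy $W^{*}(v)$ ``supplies enough room'' for each successive correction is ``the main obstacle.'' That obstacle \emph{is} the theorem: you give no rule producing $P_{n+1}$ from $P_n$, no estimate showing the defect $P_n^2-P_n$ can be strictly decreased, and no argument that each correction can be chosen in the kernel of $E$ so that the expectation is preserved at every stage. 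The only step you could actually complete --- that a $\|\cdot\|_2$-limit of self-adjoint almost-projections is a projection and that normality of $E$ passes the identity $E(P_n)=B$ to the limit --- is the routine one.

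The hyperfinite half has the same structure. You diagnose the dyadic obstruction correctly (a projection in $M_2^{\otimes n}$ has trace in $2^{-n}\mathbb{Z}$, while $\tau(B)$ need not be dyadic), but the construction of the automorphism $\theta$ is the entire point of the second assertion --- it is the reason the conclusion reads $\theta(B)$ rather than $B$ --- and you leave it as ``the subtle point.'' You formulate no condition on $B$ under which your running construction closes, do not define the rearrangement, and do not prove convergence of the partial projections or control of the residuals beyond asserting they ``tend to $0$ in $L^2$.'' As written, both halves reduce to ``one would have to show the hard part,'' so neither constitutes a proof; if you want to establish this proposition you should either work through the free-probability computation that makes the generator masa tractable (starting from $E_{\mathcal{A}}(v^{*}av)=\tau(a)1$ and reducing a general positive contraction to the scalar case over its spectral decomposition) or simply cite the DFHS paper as this one does.
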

There are several consequences of this result that the reader can work out for herself. For general $II_1$ factors, far less is known. Indeed, everything that is known so far with the exception of the result mentioned above and some extensions proved in the same paper, is a straightforward interpretation of results for matrices. For instance, the matricial Schur-Horn theorem guarantees that $\lambda I$ can be lifted if $\lambda$ is a rational number, but it is not known if irrational multiples of the identity can be lifted to projections. In this note we show that this is indeed the case. It will follow that elements with finite spectrum can be lifted to projections.

 In this note, we will work in a slightly more general context. Kadison's carpenter problem is a special case of a majorization problem for von Neumann algebras. The notion of majorization in von Neumann algebras goes back at least to Hiai's work\cite{Hiai} in the 80's.
\begin{df}[Majorization]
Given two self-adjoint operators $A, S$ in a finite factor $(\mathcal{M},\tau)$, say that $A$ is majorized by $S$, denoted by $A \prec S$ if 
\[\tau(f(A)) \leq \tau(f(S))\] for every continuous convex real valued function $f$ defined on a closed interval $[c,d]$ containing the spectra of both $A$ and $S$. 
\end{df}
 The condition implies that $\tau(A) = \tau(S)$. Majorization can be expressed in several ways and these equivalences can be found in \cite{Hiai} and the references therein. A natural extension of Kadison's problem was formulated by Kadison and Arveson in \cite{ArvKad}. 
\begin{question}[Arveson and Kadison's Schur Horn problem]
Let $\mathcal{A}$ be a positive element in $\mathcal{A}$ and $S$ a positive element in $\mathcal{M}$ such that $A \prec S$. Then, is it true that there exists an element $T$ in $\mathcal{O}(S) = \overline{\{U S U^{*},\,\, U \in \mathcal{U}(M)\}}^{||}$ such that $E(T) = A$?  
\end{question}
One does need to take the norm closure; See the example following lemma(5.5) in the same paper.

This problem was solved in the affirmative for the generator and radial masas in the free group factors in \cite{DFHS}, where it was also solved modulo an automorphism of the masa for Cartain masas in the hyperfinite $II_1$. In this note, we will work with general masas inside general type $II_1$ factors. Our main result is the following theorem whose proof is an adaptation of the best known proof of the matricial Schur Horn theorem. It should come as no surprise that we do not need to take the norm closure to achieve lifting. 
\begin{theorem}
 Let $\mathcal{A}$ be a masa in a $II_1$ factor $\mathcal{M}$ and let $E$ be the normal conditional expectation from $\mathcal{M}$ to $\mathcal{A}$. Let $A \in \mathcal{A}$ and $S \in \mathcal{M}$ be positive operators with finite spectrum such that $A \prec S$. Then, there is a unitary $U$ in $\mathcal{M}$ so that $E(U S U^{*}) = A$. 
\end{theorem}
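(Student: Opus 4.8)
The plan is to reduce the statement, by an induction modelled on Horn's algorithm for the matricial Schur--Horn theorem, to the case in which $S$ is (an affine image of) a projection and $A$ is a scalar multiple of $1$ --- that is, to Kadison's carpenter problem for scalar targets, the irrational case of which is the essential new input established earlier. The observation that organizes everything is that it suffices to produce \emph{some} positive $T\in\mathcal{M}$ having the same spectral distribution as $S$ (equivalently, $\tau(Q)=\tau(Q')$ for matching spectral projections) together with $E(T)=A$; for then $S$ and $T$, being equidistributed positive operators of finite spectrum in a $II_1$ factor, satisfy $USU^{*}=T$ for some unitary $U\in\mathcal{M}$ (assemble partial isometries between the matching spectral projections into a unitary), and hence $E(USU^{*})=A$. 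So the whole problem becomes the \emph{construction} of such a $T$, which I will do block by block.

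Induct on $m$, the number of distinct eigenvalues $a_1>\cdots>a_m$ of $A$, with spectral projections $P_1,\dots,P_m\in\mathcal{A}$. If $m=1$ then $A=a_1 1$ with $a_1=\tau(S)$; here I run a secondary induction on the number $n$ of distinct eigenvalues $s_1>\cdots>s_n$ of $S$. For $n=1$ the claim $A\prec s_1 1$ forces $A=s_1 1=S$ (apply the convex function $x\mapsto (x-s_1)^2$), so $T=S$. For $n=2$, writing $S=s_2 1+(s_1-s_2)Q$ with $Q$ a projection, producing $T$ is exactly the carpenter problem for the scalar target $(a_1-s_2)/(s_1-s_2)\in[0,1]$ inside $\mathcal{M}$, which we may invoke. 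For $n\ge 3$, choose a projection $f\in\mathcal{A}$ and split the finitely supported distribution $\mu_S$ of $S$ as $\mu_S=\nu_1+\nu_2$ with total masses $\tau(f)$ and $1-\tau(f)$, both of mean $a_1$ (possible since $s_n\le a_1\le s_1$), and each supported on at most $n-1$ points (one piece takes the top atom whole, the other the bottom atom whole, and the middle atoms are distributed so as to balance the two means). By the inductive hypothesis in $n$, build $T|_f$ in $f\mathcal{M}f$ with distribution $\nu_1$ and $E(T|_f)=a_1 f$, and $T|_{1-f}$ in $(1-f)\mathcal{M}(1-f)$ with distribution $\nu_2$ and $E(T|_{1-f})=a_1(1-f)$; then $T=T|_f\oplus T|_{1-f}$ has distribution $\mu_S$ and $E(T)=a_1 1=A$.

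For the inductive step $m\ge 2$: from $A\prec S$ one has $\|A\|\le\|S\|$ and $\min S\le\min A$, hence $s_n\le a_m\le a_1\le s_1$, and --- exactly as in the combinatorial step of Horn's algorithm --- one can choose a finitely supported sub-distribution $\nu_1\le\mu_S$ of total mass $\alpha_1=\tau(P_1)$ and mean precisely $a_1$ such that the complement $\mu_S-\nu_1$ majorizes the restriction $A(1-P_1)=(1-P_1)A(1-P_1)$, which has the $m-1$ eigenvalues $a_2>\cdots>a_m$. In the corner $P_1\mathcal{M}P_1$ use the $m=1$ case to produce $T_1$ with distribution $\nu_1$ and $E(T_1)=a_1 P_1$; in $(1-P_1)\mathcal{M}(1-P_1)$ use the inductive hypothesis in $m$ to produce $T_2$ with distribution $\mu_S-\nu_1$ and $E(T_2)=A(1-P_1)$. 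Then $T=T_1\oplus T_2$ has distribution $\mu_S$ and $E(T)=a_1 P_1+A(1-P_1)=A$, closing the induction.

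Two places require genuine care. First and foremost, the base case: realizing $E(P)=c\,1$ by a projection $P$ when $c$ is irrational. This is the one step that does not reduce to matrices --- for rational $c=p/q$ one partitions $1$ into $q$ equivalent projections of $\mathcal{A}$, builds a copy of $M_q$ in $\mathcal{M}$ over them, and places a rank-$p$ circulant projection, whose off-diagonal matrix units are killed by $E$ because $e_i\mathcal{A}e_j=0$ for $i\neq j$; but for irrational $c$ one must exploit the diffuseness of $\mathcal{A}$ and the infinite-dimensional structure of $\mathcal{M}$ in an essential way (this is precisely where the general carpenter problem for positive contractions remains open), and I expect this to be the main obstacle of the whole paper. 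Second, the selection of the sub-distribution $\nu_1$ with the required majorization property in the step $m\ge 2$ (and the analogous splitting for $m=1$, $n\ge3$): this is the measure-theoretic analogue of the interlacing bookkeeping in Horn's algorithm, it is where the hypothesis $A\prec S$ is actually consumed, and although elementary it must be done carefully so that both pieces have strictly smaller spectral complexity and the recursion terminates.
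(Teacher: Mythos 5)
Your overall strategy is viable and is organized differently from the paper's: you induct on the number of eigenvalues of $A$, peeling off the entire top spectral block of $A$ at once and reducing the scalar-target case to the carpenter theorem by a secondary induction on the spectrum of $S$; the paper instead puts $A$ and $S$ over a common refinement $r_1<\cdots<r_L$ of their two spectral partitions and inducts on $L$, at each step splitting off a \emph{matched pair} of two-cell pieces $A_1\prec S_1$ (located at the first index where $\gamma_n-\delta_n$ changes sign, with one cell endpoint $r$ adjusted so that $\tau(A_1)=\tau(S_1)$) and handling that pair by the affine reduction to the carpenter theorem. Both routes bottom out at the same place (lifting a scalar to a projection), and your opening reduction --- that it suffices to build $T$ equidistributed with $S$ with $E(T)=A$, since equidistributed finite-spectrum positive operators in a $II_1$ factor are unitarily conjugate --- is correct and is implicitly how the paper proceeds as well.

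However, there are two genuine gaps. First, your explicit splitting recipe in the case $m=1$, $n\ge 3$ is false as stated. Take $\mu_S$ uniform on $\{0,\tfrac12,1\}$ (three atoms of mass $\tfrac13$) and $a_1=\tfrac12$: the piece $\nu_1$ that contains the whole atom at $1$ and none of the atom at $0$ has all of its remaining available mass at $s_2=\tfrac12=a_1$, so its mean is strictly greater than $a_1$ no matter how the middle atom is distributed; symmetrically $\nu_2$ fails too. The lemma you need is true but requires a different construction: if some atom sits at $a_1$ peel it off alone; otherwise pair one atom above $a_1$ with one atom below $a_1$ and remove the largest two-point sub-measure on that pair with mean $a_1$, which exhausts at least one of the two atoms and leaves a complement with at most $n-1$ atoms and unchanged mean. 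Second, and more seriously, the inductive step for $m\ge 2$ rests entirely on the assertion that one can choose $\nu_1\le\mu_S$ of mass $\tau(P_1)$ and mean $a_1$ whose complement majorizes $A(I-P_1)$; this is exactly where the hypothesis $A\prec S$ is consumed, you acknowledge it needs care, and you do not prove it. It is not a routine verification --- the choice of which parts of $\mu_S$ to allocate to $\nu_1$ interacts with every partial-sum inequality defining the residual majorization. The paper's device of transferring mass only between two adjacent cells of the common refinement at each step exists precisely to make the corresponding residual check ($A_2\prec S_2$) a short finite computation; if you want to peel a whole eigenvalue block of $A$ at once, you must supply the analogous (and harder) bookkeeping. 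As it stands the proposal is a plausible outline with its central combinatorial lemma missing and its secondary splitting lemma incorrect.
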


 The theorem says that the Schur-Horn problem can be solved when both elements have finite spectrum. While this result will hardly come as a surprise, it is new. Routine calculations will then allow us to adapt the above theorem to deduce an approximate Schur-Horn theorem for general operators in a $II_1$ factor.

\begin{theorem}
Let $S$ be a self-adjoint operator in $\mathcal{M}$. Then, the norm closure of $E(\mathcal{U}(S))$ equals $\{A \in \mathcal{A} \mid A \prec S\}$.
\end{theorem}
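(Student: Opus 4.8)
The plan is to prove the two inclusions separately; the reverse one is where Theorem 1.1 enters, through a finite-spectrum approximation.

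\medskip

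\emph{The inclusion $\overline{E(\mathcal{U}(S))} \subseteq \{A \in \mathcal{A} : A \prec S\}$.} First I would record the standard fact that a trace-preserving conditional expectation can only decrease an operator in majorization: $E(X) \prec X$ for every self-adjoint $X \in \mathcal{M}$. One way to see this with the tools at hand is that Dixmier averaging over the abelian group $\mathcal{U}(\mathcal{A})$ exhibits $E(X)$ as the unique element of $\mathcal{A}$ in the norm-closed convex hull of the orbit $\{uXu^{*} : u \in \mathcal{U}(\mathcal{A})\}$ (uniqueness since $\tau(Za)=\tau(Xa)$ for all $a\in\mathcal A$ forces $Z=E(X)$), and since for each continuous convex $f$ the map $Y \mapsto \tau(f(Y))$ is convex and norm-continuous on self-adjoints and constant on that orbit, one gets $\tau(f(E(X))) \le \tau(f(X))$. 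As $\prec$ is unitarily invariant this gives $E(USU^{*}) \prec S$ for all $U \in \mathcal{U}(\mathcal{M})$. Finally $\{A \in \mathcal{A} : A \prec S\}$ is norm-closed: if $A_{n} \to A$ with $A_{n} \prec S$, pick a compact interval $[c,d]$ containing $\sigma(S)$ and all $\sigma(A_{n})$ with $n$ large; for continuous convex $f$ on $[c,d]$ uniform continuity gives $f(A_{n}) \to f(A)$ in norm, so $\tau(f(A)) = \lim_{n}\tau(f(A_{n})) \le \tau(f(S))$.

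\medskip

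\emph{The reverse inclusion.} Fix $A \in \mathcal{A}$ with $A \prec S$ and $\epsilon > 0$. Translating by a scalar multiple of the identity (which changes neither $\prec$ nor the effect of $E$ on orbits) we may assume $S \ge 0$, and then $0 \le A \le \|S\|$ because $A \prec S$. The heart of the matter is a purely order-theoretic claim: for every $\delta > 0$ there are positive operators $A' \in \mathcal{A}$ and $S' \in \mathcal{M}$ of \emph{finite} spectrum with $A' \prec S'$, $\|A - A'\| < \delta$ and $\|S - S'\| < \delta$. Granting this, Theorem 1.1 produces $V \in \mathcal{U}(\mathcal{M})$ with $E(VS'V^{*}) = A'$, whence
\[
\|E(VSV^{*}) - A\| \le \|E\bigl(V(S - S')V^{*}\bigr)\| + \|A' - A\| \le \|S - S'\| + \|A - A'\| < 2\delta ,
\]
so taking $\delta = \epsilon/2$ shows $A \in \overline{E(\mathcal{U}(S))}$; note that no norm closure is needed when $S$ itself has finite spectrum, by Theorem 1.1 directly.

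\medskip

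\emph{The approximation claim, and the main obstacle.} I expect this to be the only genuinely laborious step, and the point requiring care is that the rounding of $S$ and the rounding of $A$ must stay \emph{compatible} so that exact majorization survives. I would argue through the integrated eigenvalue function: writing $\lambda_{X} : [0,1] \to \mathbb{R}$ for the nonincreasing rearrangement of $X$ and $\Lambda_{X}(s) = \int_{0}^{s}\lambda_{X}$, one has $A \prec S \iff \Lambda_{A} \le \Lambda_{S}$ on $[0,1]$ with $\Lambda_{A}(1) = \Lambda_{S}(1)$, and $\Lambda_{A},\Lambda_{S}$ are concave. Approximate $\Lambda_{S}$ \emph{from above} by the lower envelope of finitely many of its supporting lines — including those at the two endpoints, and choosing the finitely many jumps of $\lambda_{S}$ of size $\ge \delta$ among the contact points — to get a concave piecewise-linear $\Lambda_{S'} \ge \Lambda_{S}$ with $\Lambda_{S'}(0)=0$, $\Lambda_{S'}(1) = \Lambda_{S}(1)$ and derivative uniformly within $\delta$ of $\lambda_{S}$; symmetrically, approximate $\Lambda_{A}$ \emph{from below} by a chordal piecewise-linear interpolant $\Lambda_{A'} \le \Lambda_{A}$ agreeing with $\Lambda_{A}$ at the endpoints and with derivative within $\delta$ of $\lambda_{A}$. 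Then $\Lambda_{A'} \le \Lambda_{A} \le \Lambda_{S} \le \Lambda_{S'}$ with equality at $1$, i.e.\ the nonincreasing step functions satisfy $\lambda_{A'} \prec \lambda_{S'}$. It remains to realize $\lambda_{A'}$ and $\lambda_{S'}$ as operators $A' \in \mathcal{A}$, $S' \in \mathcal{M}$ of finite spectrum, by cutting the spectral resolutions of $A$ and of $S$ into blocks of the prescribed traces (splitting atoms using that the masa $\mathcal{A}$ is diffuse and that $\mathcal{M}$ is a factor) and assigning the appropriate constant on each block; since on each block the oscillation of the original operator was arranged to be $< \delta$, this yields $\|A - A'\| < \delta$ and $\|S - S'\| < \delta$, and the rearrangements of $A',S'$ are exactly $\lambda_{A'},\lambda_{S'}$, so $A' \prec S'$, completing the claim.
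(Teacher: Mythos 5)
Your overall strategy for the main (reverse) inclusion is the same as the paper's --- approximate $A$ and $S$ simultaneously by finite-spectrum operators $A'\prec S'$ within $\delta$ in norm, invoke the finite-spectrum Schur--Horn theorem, and finish with the triangle inequality and contractivity of $E$ --- but your construction of the approximants is genuinely different. The paper block-averages \emph{both} operators from below in the majorization order: it averages $A$ over the spectral blocks $E_A([\tfrac{k-1}{n},\tfrac{k}{n}))$ to get $B\prec A$, then carves $S$ into blocks $Q_k$ commuting with $S$ with $\tau(Q_k)=\tau(P_k)$ and averages there to get $T\prec S$; since both approximants sit \emph{below} the originals, the needed relation $B\prec T$ does not come for free and must be re-verified via the trace alignment and the finite-spectrum majorization criterion (Lemma \ref{atoMaj}). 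You instead approximate $\Lambda_S$ from \emph{above} by supporting lines and $\Lambda_A$ from \emph{below} by chords, so that $\Lambda_{A'}\le\Lambda_A\le\Lambda_S\le\Lambda_{S'}$ holds automatically and the two approximations are completely decoupled --- no alignment of blocks and no analogue of Lemma \ref{atoMaj} is needed. The price is that the supporting-line envelope is slightly more delicate to control in operator norm (you must include contact points straddling the large jumps of $\lambda_S$ and argue that the envelope's slopes stay within $\delta$ of $\lambda_S$ on each constancy block); your sketch acknowledges this and it does go through, since a bounded monotone function admits a finite partition into intervals of oscillation less than $\delta$ once its finitely many large jumps are isolated. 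You also supply two things the paper leaves implicit: the reduction from self-adjoint to positive $S$ by translation, and the forward inclusion.

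One caveat on that forward inclusion: your argument places $E(X)$ in the \emph{norm}-closed convex hull of $\{uXu^{*}:u\in\mathcal{U}(\mathcal{A})\}$, which is the relative Dixmier property for the inclusion $\mathcal{A}\subset\mathcal{M}$; this is not an off-the-shelf fact for an arbitrary masa. You do not need it: the $\|\cdot\|_{2}$-closed convex hull suffices (its unique element of minimal $\|\cdot\|_{2}$-norm is $\mathcal{U}(\mathcal{A})$-invariant, hence lies in $\mathcal{A}$ and must be $E(X)$), and $Y\mapsto\tau(f(Y))$ is convex and $\|\cdot\|_{2}$-continuous on norm-bounded self-adjoint sets with spectra in a fixed compact interval. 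Alternatively, $E(X)\prec X$ for a trace-preserving conditional expectation is classical and can be quoted from Hiai's paper cited in the introduction. With that substitution your proof is complete.
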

In particular, letting $\mathcal{O}(S) = \overline{\{USU^{*} \mid U \in \mathcal{U}(\mathcal{M})\}}^{||}$, we have that
\[\overline{E(\mathcal{O}(S))}^{||} = \{A \in \mathcal{A} \mid A \prec S\}\]

The conjectured Schur-Horn theorem of Arveson and Kadison says that we do not need to take the norm closure for equality, something that we are unable to prove in this note. A weaker version of our theorem, where the $\sigma-$SOT closure was taken in the place of the norm closure was proved by Argerami and Massey in \cite{MasArgIn}. Also, the above result was established for Cartan masas in the hyperfinite $II_1$ factors(and thus for general semi-regular masas, see \cite{PopKad}) in \cite{DFHS}.

The paper has four sections apart from the introduction; In section $2$, we show that scalars can be lifted to projections. In section $3$, we push this through to show that the Schur-Horn problem can be solved for operators with finite spectrum. Section $4$ contains the approximate Schur-Horn theorem. There is then a last section consisting of some remarks and observations. 

Some words on notation: Given two operators $A, B$ inside a von Neumann algebra $\mathcal{M}$ such that there is a projection $P$ inside $\mathcal{M}$ such that $A = PAP$ and $B = (I-P)B(I-P)$, in order to stress the fact that $A$ and $B$ live under the auspices of orthogonal projections, we will use the expression $A\oplus B$ to denote their sum. Next, given a self-adjoint operator $A$ and a Borel measurable subset $X$ of the real line, the expression $E_{A}(X)$ will denote the spectral projection of $A$ corresponding to the subset $X$. This notation might cause confusion with the notation $E_{\mathcal{A}}(A)$ or simply $E(A)$ where $\mathcal{A}$ is a subalgebra of $\mathcal{M}$, which denotes the image under a conditional expectation $E$. We apologize for this, but retain the notations due to their provenance. Finally, lower case letters, possibly with subscripts, like $a, b$ and $s_i$ will always refer to scalars. We will always use upper case letters $S, T$ and so forth to refer to operators.

\section{Lifting Scalars}
We begin with a simple observation. 
\begin{lemma}\label{simLem}
Let $P$ be a projection in a masa $\mathcal{A}$ inside a type $II_1$ factor $\mathcal{M}$ and let $\lambda, a, b$ be positive scalars such that $\tau(S) = \lambda$ where $S = a P + b (I-P)$. Then, there is a unitary $U$ in $\mathcal{M}$ and a projection $Q$ in $\mathcal{A}$ such that letting $T = U S U^{*}$, we have that
\begin{enumerate}
 \item $E(Q T Q) = \lambda Q$.
 \item $(I-Q) T (I - Q) = c R + d (I - Q - R)$ for some projection $R$ in $\mathcal{A}$ with $R \leq I - Q$ and positive numbers $c, d$. 
 \item $\tau(Q) \geq \dfrac{1}{3}$.
\end{enumerate}
   
\end{lemma}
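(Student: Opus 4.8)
\emph{Plan.} The plan is to sidestep the (for irrational densities, still open) carpenter problem by lifting a \emph{rational} density on a large corner, which is completely explicit, and to carry the genuinely irrational information in $\tau(P)$ harmlessly in a scalar factor and in an off‑diagonal block. Set $\mu=\tau(P)$. After disposing of the trivial cases $\mu\in\{0,1\}$ and $a=b$ (where $S$ is a scalar and $Q=I$ works), and after replacing $(P,a,b)$ by $(I-P,b,a)$ if necessary (this changes neither $S$ nor $\lambda$, since $S=aP+b(I-P)=b(I-P)+a\bigl(I-(I-P)\bigr)$), I may assume $0<\mu\le\tfrac12$. Writing $S=bI-(b-a)P$, any unitary $U$ with $UPU^{*}=P'$ gives $T=USU^{*}=bI-(b-a)P'$. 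Hence it suffices to produce a projection $P'\in\mathcal M$ with $\tau(P')=\mu$ and a projection $Q\in\mathcal A$ with $\tau(Q)=1-\mu\ (\ge\tfrac12\ge\tfrac13)$ for which $E(QP'Q)=\mu Q$ and $(I-Q)P'(I-Q)$ is a two‑valued element of $\mathcal A(I-Q)$ with spectrum in $[0,1]$; indeed $E(QTQ)=bQ-(b-a)E(QP'Q)=\bigl(b-(b-a)\mu\bigr)Q=\lambda Q$, and the positivity of $c,d$ in (2) is automatic because $b-(b-a)t=(1-t)b+ta>0$ for $t\in[0,1]$.

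\emph{Choosing the integer and the pieces.} Since $0<\mu\le\tfrac12$ we have $\tfrac1\mu-1\ge1$, so the half‑open interval $[\tfrac1\mu-1,\tfrac1\mu)$, of length one, contains a positive integer $n$; note $0<n\mu<1$ and $\tfrac{1-\mu}{n}\le\mu$. Fix $Q\in\mathcal A$ with $\tau(Q)=1-\mu$, split $Q=e_{1}+\dots+e_{n}$ into $n$ projections in $\mathcal A$ of equal trace $\tfrac{1-\mu}{n}$ (possible since $\mathcal A Q$ is diffuse), which are Murray–von Neumann equivalent inside the $II_1$ factor $Q\mathcal M Q$; pick matrix units $v_{ij}\in\mathcal M$ for this system with $v_{ii}=e_i$, $v_{ij}^{*}=v_{ji}$, $v_{ij}v_{k\ell}=\delta_{jk}v_{i\ell}$, and put
\[ G=\frac1n\sum_{i,j=1}^{n}v_{ij}. \]
The relations give $G^{2}=G=G^{*}$ and $\tau(G)=\tfrac{1-\mu}{n}$; and since for $i\ne j$ we have $v_{ij}=e_iv_{ij}e_j$ with $e_i\perp e_j$ in the abelian algebra $\mathcal A$, it follows that $E(v_{ij})=e_iE(v_{ij})e_j=0$, hence $E(G)=\tfrac1n\sum_i e_i=\tfrac1n Q$. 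Also fix $G'\in\mathcal A$ with $G'\le I-Q$ and $\tau(G')=\tfrac{1-\mu}{n}$ (possible as $\tfrac{1-\mu}{n}\le\mu=\tau(I-Q)$), and a partial isometry $v\in\mathcal M$ with $v^{*}v=G'$, $vv^{*}=G$ (possible, equal traces).

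\emph{The projection.} With respect to $I=Q\oplus(I-Q)$ I will define
\[ P'=\begin{pmatrix} n\mu\,G & \sqrt{n\mu(1-n\mu)}\,v\\ \sqrt{n\mu(1-n\mu)}\,v^{*} & (1-n\mu)G'\oplus(I-Q-G')\end{pmatrix}, \]
and check directly that $P'$ is a projection: the three block identities reduce (using $Gv=v=vG'$) to $(n\mu)^{2}+n\mu(1-n\mu)=n\mu$, $(1-n\mu)^{2}+n\mu(1-n\mu)=1-n\mu$, and $n\mu+(1-n\mu)=1$. A short count gives $\tau(P')=n\mu\cdot\tfrac{1-\mu}{n}+\bigl[(1-n\mu)\tfrac{1-\mu}{n}+(\mu-\tfrac{1-\mu}{n})\bigr]=\mu$. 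Moreover $E(QP'Q)=E(n\mu\,G)=\mu Q$, and $(I-Q)P'(I-Q)=(1-n\mu)G'\oplus(I-Q-G')$ is two‑valued in $\mathcal A(I-Q)$ with eigenvalues $1-n\mu$ and $1$ in $(0,1]$. Taking a unitary $U$ with $UPU^{*}=P'$ and $T=USU^{*}=bI-(b-a)P'$, translating back yields (1), (3) and (2) with $R=G'$, $c=b-(b-a)(1-n\mu)=(1-n\mu)a+n\mu\,b>0$, $d=b-(b-a)=a>0$.

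\emph{Main obstacle.} I expect the only genuine difficulty to be the conceptual one: one must \emph{not} attempt $E(QP'Q)=\mu Q$ directly (for irrational $\mu$ this is precisely the unresolved carpenter problem), but instead lift the rational density $\tfrac1n$ — for which $G$ above is an explicit witness — on the corner $Q$, letting the irrational part of $\mu$ survive only as the \emph{scalar} $n\mu$ multiplying the projection $G$ in the compression $QP'Q=n\mu G$ and in the off‑diagonal block. Everything else (why $\tau(Q)=1-\mu$ is the forced choice that collapses the $(I-Q)$‑block to exactly two values, why $n$ can be taken with $0<n\mu<1$ and $\tfrac{1-\mu}{n}\le\mu$, and the block arithmetic verifying that $P'$ is a projection of the right trace) is then routine, though it should be carried out carefully.
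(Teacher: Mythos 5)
Your proof is correct, and it takes a genuinely different route from the paper's. The paper keeps $S=aP+b(I-P)$ intact and runs an iterative rotation argument: with $k$ the largest integer such that $(k+1)\tau(P)\le 1$, it picks $k$ further copies $Q_1,\dots,Q_k$ of $P$ and applies a chain of $2\times 2$ Givens rotations $U_1,\dots,U_k$, each angle chosen so that one more diagonal block of the conjugated operator acquires expectation exactly $\lambda$; the resulting $Q=P+Q_1+\cdots+Q_{k-1}$ has $\tau(Q)=k\tau(P)>\frac{k}{k+2}\ge\frac13$. You instead linearize first ($S=bI-(b-a)P$, so conjugating $S$ is the same as moving $P$ to another projection $P'$ of trace $\mu$) and then build $P'$ in one shot as a $2\times 2$ block operator whose $Q$-corner is $n\mu\,G$ for the explicit averaging projection $G=\frac1n\sum_{i,j}v_{ij}$ satisfying $E(G)=\frac1n Q$, with a single off-diagonal partial isometry restoring the projection property. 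I checked the details --- the choice of $n\in[\frac1\mu-1,\frac1\mu)$ giving $0<n\mu<1$ and $\frac{1-\mu}{n}\le\mu$, the three block identities making $P'$ a projection, the trace count $\tau(P')=\mu$, and the positivity of $c=a(1-n\mu)+b\,n\mu$ and $d=a$ --- and they all go through, including the degenerate case $\frac{1-\mu}{n}=\mu$ where the summand $I-Q-G'$ vanishes. What your route buys: the stronger constant $\tau(Q)=1-\tau(P)\ge\frac12$ (which would improve the contraction ratio $\frac23$ in the subsequent iteration that lifts $\lambda I$ to a projection down to $\frac12$), no induction over $k$ rotations, and a transparent isolation of the ``rational carpenter'' mechanism in the single projection $G$. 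What the paper's route buys: it only ever manipulates $2\times 2$ corners rather than an $n\times n$ system of matrix units, and it works with the two-point-spectrum operator directly instead of reducing to a projection --- the affine reduction $S\mapsto cS+dI$ that you use at the outset is exactly the device the paper postpones to Proposition~\ref{prop0}.
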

\begin{proof}
The lemma is trivial if $a = b$, for then, $a = b = \lambda$ and there is nothing to prove. We assume without loss of generality that $a > b$. Since $\tau(S) = \lambda$, we must then have that $a > \lambda > b$. We may also assume that $\tau(P) \leq \dfrac{1}{2}$. For, suppose we have proved the lemma in this case, the result when $\tau(P) > \dfrac{1}{2}$ can be derived by applying the lemma to $I-S$ and $(1 -\lambda)I$. We therefore assume that $\tau(P) \leq \dfrac{1}{2}$. 

 Let $k$ be the largest integer such that $(k+1) \tau(P) \leq 1$. Since $\tau(P) \leq \dfrac{1}{2}$, $k$ must be at least $1$. Pick projections $Q_1, \cdots, Q_{k}$, each of trace $\tau(P)$ in $\mathcal{A}$ that are mutually orthogonal and also orthogonal to $P$. let $V_{1}, \cdots, V_{k}$ be  partial isometries in $\mathcal{M}$ such that 
\begin{enumerate}
 \item $V_{1}^{*} V_{1} = Q_{1}$ and $V_{1} V_{1}^{*} = P$ .
 \item For $2 \leq i \leq k$, $V_{i} V_{i}^{*} = Q_{i-1}$ and  $V_{i}^{*} V_{i} = Q_{i}$. 
 
\end{enumerate}
Pick $\theta_{1}$ such that $a \operatorname{cos}^{2}(\theta_{1}) + b \operatorname{sin}^{2}(\theta_1) = \lambda$ and let $U_1$ be the operator 
\[U_1 = \operatorname{cos}(\theta_{1}) P + \operatorname{sin}(\theta_{1}) V_1 - \operatorname{sin}(\theta_{1}) V_1^{*}  +  \operatorname{cos}(\theta_{1}) Q_{1} + (I - P - Q_1)\]
 We will identify the above operator with the operator matrix(using $V_1$ as the matrix unit $E_{12}$), an identification that is standard. 

\[U_1 =  \left( \begin{array}{ccc}
\operatorname{cos}(\theta_{1}) & \operatorname{sin}(\theta_{1}) & 0\\
-\operatorname{sin}(\theta_{1}) & \operatorname{cos}(\theta_{1}) & 0\\ 
0 & 0 & I\end{array} \right)\] 

In this same identification, $S$ is the operator
\[S =  \left( \begin{array}{ccc}
a & 0 & 0\\
0 & b & 0\\ 
0 & 0 & b\end{array} \right)\] 

Let $S_{1} = U_1 S U_{1}^{*}$. It is easy to check that $U_1$ is a unitary and that
 
\[ S_1 = \left( \begin{array}{ccc}
a \operatorname{cos}^{2}(\theta_{1}) + b \operatorname{sin}^{2}(\theta_1) & \ast & 0\\
\ast & a \operatorname{sin}^{2}(\theta_{1}) + b \operatorname{cos}^{2}(\theta_{1}) & 0\\ 
0 & 0 & b\end{array} \right)  =  \left( \begin{array}{ccc}
\lambda & \ast & 0\\
\ast & a_1 & 0\\ 
0 & 0 & b_1\end{array} \right)\]
where $a_1 = a \operatorname{sin}^{2}(\theta_{1}) + b \operatorname{cos}^{2}(\theta_{1})$ and $b_1 = b$. By the trace condition, 
\[\lambda \tau(P) + a_1 \tau(P) + b_1 (1 - 2 \tau(P)) = \lambda.\] Since $b_1 = b < \lambda$, we must have that $a_1 > \lambda$ and afortiori $a_1 > b_1$. 

Now, continue as above. Pick $\theta_{2}$ such that $a_1 \operatorname{cos}^{2}(\theta_{2}) + b_1 \operatorname{sin}^{2}(\theta_2) = \lambda$ and let $U_2$ be the operator 
\[U_2 = \operatorname{cos}(\theta_2) Q_1 + \operatorname{sin}(\theta_2) V_2 - \operatorname{sin}(\theta_2) V_2^{*}  +  \operatorname{cos}(\theta_2) Q_2 + (I - Q_1 - Q_2).\] We may write the unitary $U_2$ as
\[U_2 =  \left( \begin{array}{cccc}
I & 0 & 0 & 0\\
0 &\operatorname{cos}(\theta_{2}) & \operatorname{sin}(\theta_{2}) & 0\\
0 & -\operatorname{sin}(\theta_{2}) & \operatorname{cos}(\theta_{2}) & 0\\ 
0 & 0 & 0 & I\end{array} \right)\]
and let $S_2 = U_{2} S_{1} U_{2}^{*}$. We have that
\[ S_2 = \left( \begin{array}{cccc}
\lambda & \ast & \ast & 0\\
\ast & \lambda & \ast & 0\\
\ast & \ast & a_2 & 0\\ 
0 & 0 & 0 & b_2 \end{array} \right)\]
where $a_2 = a_1 \operatorname{sin}^{2}(\theta_{2}) + b_1 \operatorname{cos}^{2}(\theta_{2})$ and $b_2 = b_1$. By the trace condition, 
\[2\lambda \tau(P) + a_2 \tau(P) + b_2 (1 - 3 \tau(P)) = \lambda.\] Since $b_2 = b_1 = b < \lambda$, we must have that $a_2 > \lambda$ and afortiori $a_2 > b_2$.

Proceeding this, $k-2$ more times, we get an operator $S_k$ of the form 
\[ S_k = \left( \begin{array}{cccccc}
\lambda & \ast & \hdots & \ast & \ast & 0\\
\ast & \lambda & \hdots & \ast & \ast & 0\\
\vdots & \vdots & \ddots & \vdots & \ast & 0\\
\ast & \ast & \ast & \lambda & \ast & 0\\
\ast & \ast & \ast & \ast & a_k & 0  \\
0 & 0 & 0 & 0 & 0 & b_k\end{array} \right)\]
Let $Q = P + Q_{1} + \cdots + Q_{k-1}$(if $k = 1$, let $Q = P$). We see that
\begin{enumerate}
 \item $E(Q S_{k} Q) = \lambda Q$. This is because, 
\begin{eqnarray*}
E(Q S_{k} Q) &=& E(P S_{k} P) + E(Q_1 S_k Q_1) + \cdots + E(Q_{k-1} S_k Q_{k-1})\\
             &=& \lambda P + \lambda Q_1 + \cdots + \lambda Q_{k-1}\\
	     &=& \lambda Q.
\end{eqnarray*}
($S_k$ is the operator $T$ promised in the statement of the lemma).
 \item $(I - Q) S_{k} (I - Q)$ has two point spectrum in $(I-Q) M (I- Q)$. 
 \item $\tau(Q) = k \tau(P)$. Since $(k+1) \tau(P) \leq 1 < (k+2) \tau(P)$, we see that 
\[\tau(Q) = k \tau(P) = \dfrac{k}{k+2} (k+2) \tau(P) > \dfrac{k}{k+2} \geq \dfrac{1}{3}.\] 
\end{enumerate}
The lemma follows. 
\end{proof}

\begin{theorem}
Let $\mathcal{A}$ be a masa in a $II_1$ factor $\mathcal{M}$ and let $E$ be the normal conditional expectation from $\mathcal{M}$ to $\mathcal{A}$. Then for any $0 \leq \lambda \leq 1$, there is a projection $P$ in $\mathcal{M}$ such that $E(P) = \lambda I$. 
\end{theorem}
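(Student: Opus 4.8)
The plan is to lift $\lambda I$ by an iterative "deflation" procedure, using Lemma~\ref{simLem} to peel off a chunk of the algebra on which the target is already achieved, and then recursing on the smaller corner that remains.

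First observe that the rational case is classical: if $\lambda = p/q$ is rational, partition the identity in $\mathcal{A}$ into $q$ equivalent projections $E_1,\dots,E_q$, choose partial isometries in $\mathcal{M}$ implementing the equivalences, and build a permutation-type unitary $U$ so that $P = U(E_1+\cdots+E_p)U^*$ has $E(P) = (p/q) I$; this is just the matricial Schur--Horn/Pythagorean construction transplanted via the matrix units. So assume $\lambda$ is irrational, hence $0 < \lambda < 1$. Start with the trivial two-point operator $S_0 = \lambda I = \lambda\cdot I + \lambda\cdot 0$ — but this has $a=b$, so instead we should start one step in: pick any projection $P_0 \in \mathcal{A}$ with $\tau(P_0)$ small and set $S_0 = aP_0 + b(I-P_0)$ with $a>\lambda>b$ chosen so that $\tau(S_0)=\lambda$; such $S_0$ is unitarily equivalent to nothing useful yet, but the point is we want to produce \emph{some} positive $S$ with two-point spectrum, trace $\lambda$, that is unitarily conjugate to something whose expectation we can control. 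Actually the cleanest entry point: we must exhibit a projection, so we want to reach an operator whose only spectral values are $0$ and $1$.

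So here is the iteration. Suppose at stage $n$ we have a corner $\mathcal{M}_n = Q_n' \mathcal{M} Q_n'$ (with $Q_n'$ a projection in $\mathcal{A}$, $\tau(Q_n') = t_n$) and inside it a positive operator $S_n$ with two-point spectrum of the form $c_n R_n + d_n(Q_n' - R_n)$, $R_n \in \mathcal{A}$, with $\tau(S_n) = \lambda t_n$ (so the "average" is still $\lambda$, now relative to the corner). Apply Lemma~\ref{simLem} inside the type~$II_1$ factor $\mathcal{M}_n$ with masa $\mathcal{A}\cap\mathcal{M}_n$: we get a unitary in $\mathcal{M}_n$ and a subprojection $Q$ with $\tau(Q) \geq \frac13 t_n$ on which the conjugated operator $T_n$ satisfies $E(QT_nQ) = \lambda Q$, while $(Q_n'-Q)T_n(Q_n'-Q)$ again has two-point spectrum. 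Set $Q_{n+1}' = Q_n' - Q$ and $S_{n+1} = (Q_n'-Q)T_n(Q_n'-Q)$, extending the unitary by the identity on the rest of $\mathcal{M}$. Since $\tau(Q_{n+1}') \leq \frac23\,\tau(Q_n')$, the leftover corners shrink geometrically to $0$. Compose all the (extended) unitaries; because the $n$-th one is supported on $Q_n'$ and $\tau(Q_n')\to 0$, the infinite product converges $*$-strongly to a unitary $U$, and the conjugate $USU^*$ (where $S$ is our starting two-point operator) has $E$-image $\lambda I$ on the complement of $\bigcap Q_n'$, a projection of trace $0$, hence everywhere. But $USU^*$ has spectrum contained in... hmm — it need not be a projection.

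The fix — and this is the real content — is to run the whole thing with $\lambda$ replaced by a \emph{projection-valued} target only at the last moment: at each stage, rather than asking $E(QT_nQ) = \lambda Q$, we arrange by the rational Schur--Horn construction applied \emph{within the corner $Q$} to replace $\lambda Q$ by a genuine projection, choosing at stage $n$ a rational approximant $\lambda_n$ to $\lambda$ with $\lambda_n \to \lambda$ fast enough that $\sum \|\lambda_n - \lambda\|\,\tau(Q_n') < \infty$; on the corner $Q$ of trace $\geq \frac13 t_n$ we instead lift the rational $\lambda_n$ to an honest projection $P_n \leq Q$ with $E(P_n) = \lambda_n Q$, absorbing the discrepancy $(\lambda-\lambda_n)Q$ into the next leftover operator $S_{n+1}$ by adjusting its eigenvalues $c_{n+1}, d_{n+1}$ (still keeping them in $[0,1]$, still keeping the corner average equal to the running target). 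The projections $P_n$ are mutually orthogonal, $\sum_n \tau(P_n) = \sum_n \lambda_n \tau(Q_n') \to \lambda$, so $P := \bigoplus_n P_n$ is a projection in $\mathcal{M}$, and a short computation with the conditional expectation gives $E(P) = \lambda I$. \textbf{The main obstacle} is exactly this bookkeeping: showing that the geometrically shrinking errors from rational approximation can be carried forward inside the eigenvalues of the next two-point operator without ever leaving $[0,1]$ or breaking the trace identity, and that the resulting orthogonal sum of projections really has conditional expectation equal to $\lambda I$ and not merely $\lambda I$ up to a norm-small error. Everything else is routine: the per-stage step is Lemma~\ref{simLem} plus the finite-dimensional Pythagorean lemma, and the convergence of the leftover corners to a trace-zero (hence zero, in a factor — actually merely trace-zero, which still suffices since $E$ is trace-preserving and $\tau$ is faithful) projection is immediate from $\tau(Q_n') \leq (2/3)^n$.
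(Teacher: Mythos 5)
There is a genuine gap, and it sits in the repair rather than in the approach you abandoned. The construction you discard --- conjugate a fixed two-point-spectrum operator repeatedly by the unitaries produced by Lemma~\ref{simLem}, peeling off corners of trace at least one third of what remains --- is exactly the paper's proof; the only observation you are missing is that the initial two-point operator should be taken to be a projection $P_0\in\mathcal{A}$ of trace $\lambda$, i.e.\ the case $a=1$, $b=0$. Then every iterate $P_k=(R_{k-1}\oplus U_k)P_{k-1}(R_{k-1}\oplus U_k)^*$ is automatically a projection (it is a unitary conjugate of $P_0$), the sequence is $\|\cdot\|_2$-Cauchy because the compressions $R_lP_mR_l$ stabilize for $m\geq l$, the strong limit $P$ is therefore a projection, and $E(P)=\lambda I$ holds exactly since $\|E(P_n)-\lambda I\|_2\leq(\lambda+1)(2/3)^n\to 0$. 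Your worry that ``$USU^*$ need not be a projection'' evaporates once $S$ itself is one; no rational approximation is needed anywhere.

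The substitute you propose instead cannot give the exact statement. If $P_n\leq Q_n$ with $E(P_n)=\lambda_nQ_n$, where the $Q_n$ are mutually orthogonal projections in the masa $\mathcal{A}$ summing to $I$, then $E\bigl(\bigoplus_nP_n\bigr)=\sum_n\lambda_nQ_n$, and since each $Q_n$ has positive trace this equals $\lambda I$ only if every $\lambda_n=\lambda$ --- impossible when $\lambda$ is irrational and the $\lambda_n$ are rational. The discrepancy $(\lambda-\lambda_n)Q_n$ cannot be ``absorbed into the next leftover operator'': that leftover lives under $Q_{n+1}'=Q_n'-Q_n$, which is orthogonal to $Q_n$, and the conditional expectation onto $\mathcal{A}$ respects this decomposition, so nothing done at later stages can change $E(P)Q_n=\lambda_nQ_n$. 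What you flag as ``the main obstacle'' is therefore not a bookkeeping problem but a structural obstruction to that route; it yields at best an approximate lifting, whereas the exact result requires carrying a single projection through the iteration and conjugating it, as above.
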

\begin{proof}
Let $P_0$ be any projection of trace $\lambda$ in $\mathcal{A}$. Using lemma(\ref{simLem}), construct a unitary $U_1$ and a projection $Q_1$ in $\mathcal{A}$ such that, letting $P_1 = U_1 P_0 U_1^{*}$,
\begin{enumerate}
 \item  $\tau(Q_1) \geq \dfrac{1}{3}$. 
 \item $E(Q_1 P_1 Q_1) = \lambda Q_1$
 \item $(I-Q_1) P_1 (I - Q_1)$ has two point spectrum in $(I-Q_1) M (I - Q_1)$.  
\end{enumerate}
Let $R_{1} = Q_1$. Next, for $k =  2, 3, \cdots$, apply lemma(\ref{simLem}) to $\lambda (I-R_{k-1})$ and $(I-R_{k-1}) Q_{k-1} (I - R_{k-1})$ inside the $II_1$ factor $(I-R_{k-1})M(I-R_{k-1})$ to construct a unitary $U_{k}$ and a projection $Q_{k}$ in $(I-R_{k-1}) M (I - R_{k-1})$ and let
\[R_{k} = Q_1 \oplus Q_2 \oplus \cdots \oplus Q_{k} \quad \operatorname{and} \quad  P_{k} = (R_{k-1} \oplus U_{k})  P_{k-1} (R_{k-1} \oplus U_{k})^{*}\]Here we identify $Q_{k}$ which is a projection in  $(I-R_{k-1}) M (I - R_{k-1})$ with a projection in $\mathcal{M}$ dominated by $I-R_{k-1}$. Also note that $P_k$ is a projection. We have that
\begin{enumerate}
 \item $E(Q_{k} P_{k} Q_{k}) = \lambda Q_{k}$ and thus, 
\[E(R_{k} P_{k} R_{k}) = \sum_{m=1}^{k} E(Q_{m} P_{m} Q_{m}) = \sum_{m=1}^{k} \lambda Q_{m} = \lambda R_{k}.\]
 \item $\tau(I - R_{k}) \leq \dfrac{2}{3} \tau(I - R_{k-1}) \leq (\dfrac{2}{3})^{k}$ and hence, $R_{k}$ converges to $I$ strongly.
 \item $(I-R_{k}) P_{k} (I -R_{k})$ has two point spectrum in $(I-R_k) M (I - R_k)$. 
 \item We have that $R_{k-1} P_{k-1} R_{k-1} = R_{k-1} P_{k} R_{k-1}$ and thus, 
\begin{eqnarray}\label{constancy}
R_{l} (P_{m} - P_{n}) R_{l} = 0 \quad \text{ for any } \quad n,m \geq l.
\end{eqnarray}
\end{enumerate}

We now claim that $P_{k}$ converges in the strong operator topology to a projection that we will call $P$ and also that $E(P) = \lambda I$. For the first claim, since $\tau(R_k)$ converges strongly to $I$, for any $\epsilon > 0$ there is a $N$ so that $||(I - R_N) ||_{2} < \epsilon$. For $n, m \geq N$, 
\[  ||(P_{n} - P_{m})||_{2} \leq ||R_{N}(P_{n} - P_{m}) R_{N}||_{2} + 2||(I-R_{N})(P_{n} - P_{m})||_{2}\]

The first term is zero by (\ref{constancy}). For the second term,
\[||(I-R_{N})(P_{n} - P_{m})||_{2} \leq ||I - R_{N}||_{2} ||P_{n} - P_{m}|| \leq 2 \epsilon\]
Thus, $||(P_{n} - P_{m})||_{2} \leq 4 \epsilon$ and the sequence $\{P_n\}$ is strongly convergent. Let $P$ be the limit projection. Forthe second claim, 
\begin{align*}
||E(P) - \lambda I||_{2} = &\operatorname{lim} ||E(P_{n}) -\lambda I||_{2}\\
= &\operatorname{lim} ||\lambda R_{n} + E((I-R_{n}) P_{n} (I-R_{n})) - \lambda I||_{2}\\
= &\operatorname{lim} ||-\lambda(I - R_{n}) + E((I-R_{n}) P_{n} (I-R_{n})||_{2}\\
 \leq &\operatorname{lim}\lambda||(I-R_{n}||_{2} + ||(I-R_{n})P_n(I-R_{n})||_{2}\\
 \leq &\operatorname{lim}\lambda \left(\dfrac{2}{3}\right)^{n} + ||P_n|| \left(\dfrac{2}{3}\right)^{n} \\
 \leq &\operatorname{lim}(\lambda + 1)\left(\dfrac{2}{3}\right)^{n} \\
 = & \,0
\end{align*}
We conclude that $E(P) = \lambda I$.
\end{proof}

We record a simple corollary
\begin{prop}\label{prop00}
Let $A$  be a positive contraction in $\mathcal{A}$ that can be written as $A = \sum_{n} \lambda_n E_{n}$, where the $E_{n}$'s are orthogonal projections summing up to $I$. Then, there is a projection $P$ in $\mathcal{M}$ such that $E(P) = A$. 
\end{prop}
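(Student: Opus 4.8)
The plan is to localise the already-established scalar lifting result (Theorem 2.2) to the corners cut out by the projections $E_n$ and then reassemble the pieces: for each $n$ I would produce a projection $P_n$ below $E_n$ with $E(P_n) = \lambda_n E_n$, and then take $P = \sum_n P_n$.

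First I would arrange that the $E_n$ lie in $\mathcal{A}$. Since $A$ is a positive contraction in the abelian algebra $\mathcal{A}$, the hypothesis $A = \sum_n \lambda_n E_n$ forces the spectral measure of $A$ to be purely atomic; so I may simply re-choose the decomposition, letting $\{\lambda_n\}$ be the list of atoms and $E_n = E_A(\{\lambda_n\})$. These are mutually orthogonal projections in the von Neumann algebra generated by $A$, in particular in $\mathcal{A}$, they sum strongly to $I$, and $A = \sum_n \lambda_n E_n$ with $0 \le \lambda_n \le 1$. Discarding any $E_n$ equal to $0$ and fixing an $n$, the corner $E_n \mathcal{M} E_n$ is again a type $II_1$ factor (with unit $E_n$), the subalgebra $\mathcal{A} E_n = \mathcal{A} \cap E_n\mathcal{M}E_n$ is a masa in it, and the restriction of $E$ to $E_n\mathcal{M}E_n$ maps into $\mathcal{A}E_n$, is normal, and preserves the renormalised trace; by uniqueness of the trace-preserving normal conditional expectation it therefore is the conditional expectation of $E_n\mathcal{M}E_n$ onto $\mathcal{A}E_n$.

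Now I would apply Theorem 2.2 inside $E_n\mathcal{M}E_n$ to the scalar $\lambda_n \in [0,1]$, obtaining a projection $P_n \in E_n\mathcal{M}E_n$ with $E(P_n) = \lambda_n E_n$ (the cases $\lambda_n \in \{0,1\}$ being trivial, with $P_n = 0$ or $P_n = E_n$). Since $P_n \le E_n$ and the $E_n$ are pairwise orthogonal, the partial sums $\sum_{n \le N} P_n$ form an increasing sequence of projections bounded by $I$, hence converge in the strong operator topology to a projection $P$. Finally, normality of $E$ yields $E(P) = \sum_n E(P_n) = \sum_n \lambda_n E_n = A$, as required.

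So the statement is essentially an immediate corollary of Theorem 2.2. The only points that genuinely need a word of justification — and the closest thing to an obstacle here — are the identification of the restriction of $E$ to a corner with the conditional expectation onto the corner masa (handled by uniqueness of the trace-preserving normal conditional expectation) and the passage to the strong limit in the possibly infinite sum $\sum_n P_n$, where normality of $E$ is exactly the tool required. Neither is difficult.
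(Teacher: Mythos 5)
Your proposal is correct and follows essentially the same route as the paper: cut down to the corners $E_n\mathcal{M}E_n$, lift each scalar $\lambda_n$ to a projection $P_n$ there using the scalar-lifting theorem, and sum. The extra justifications you supply (replacing the $E_n$ by spectral projections of $A$ so they lie in $\mathcal{A}$, identifying the restricted expectation via uniqueness, and invoking normality for the infinite sum) are points the paper passes over silently, but the argument is the same.
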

\begin{proof}
The element $A$ may be written as $A = \sum_{n=1}^{\infty} \lambda_n E_n$ where the $E_{n}$'s are mutually orthogonal projections in $\mathcal{A}$ summing up to $1$ and $0 \leq \lambda _n \leq 1$ for every $n$. $E_{n} M E_{n}$ is a type $II_1$ factor and we may find a projection $P_{n}$ in $E_{n} M E_{n}$ such that $E_{AE_{n}}(P_{n}) = \lambda_{n} E_{n}$ for every $n$. Let $P$ be the projection $\sum_{n=1}^{\infty} P_{n}$. Here, we are identifying $P_{n}$ which is a projection in $E_{n} M E_{n}$ with a projection in $\mathcal{M}$ that is dominated by $E_{n}$. Then,
\[E(P) = \sum_{n=1}^{\infty} E(P_{n}) = \sum_{n=1}^{\infty} E(E_{n} P_{n} E_{n}) = \sum_{n=1}^{\infty} \lambda_{n} E_{n} = A\]
\end{proof}

\section{Schur-Horn theorem for operators with finite spectrum}
We will now bootstrap the theorem in the previous section to get a Schur-Horn theorem for positive operators with finite spectrum. Recall the following reformulation of majorization in $II_1$ factors. Let $A, S$ be positive contractions in a type $II_1$ factor $\mathcal{M}$ and let $f, g : [0,1] \rightarrow [0,1]$ be the (essentially unique, right-continuous, non-increasing) spectral weight functions, which satisfy
\[\tau(A^{n}) = \int_{0}^{1} f^{n}(r) dm(r) \quad \text{ and } \quad \tau(S^{n}) = \int_{0}^{1} g^{n}(r) dm(r) \quad \text{for } n =0, 1, \cdots\] 
Then $A \prec S$ if 
\[\int_{0}^{t} f(r) dm(r) \leq \int_{0}^{t} g(r)dm(r), \,\, 0 \leq t \leq 1 \quad \text{and} \quad \int_{0}^{1}f(r)dm(r) = \int_{0}^{1} g(r)dm(r) \]

\begin{lemma}\label{2ptSp}
 Let $A = \lambda_1 E_1 \oplus \lambda_2 E_2$ where $E_1 + E_2 = I$ and $\lambda_1 \geq \lambda_2 \geq 0$ and $S = \mu_1 F_1 \oplus \mu_2 F_2$ where $F_1 + F_2 = I$ and $\mu_1 > \mu_2 \geq 0$ be two operators in a $II_1$ factor with $\tau(A) = \tau(S)$. If $\mu_1 \geq \lambda_1$ and $\mu_2 \leq \lambda_2$, then $A \prec S$.  
\end{lemma}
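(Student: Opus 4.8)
The plan is to verify the majorization $A \prec S$ directly from the spectral-weight-function reformulation recalled just before the lemma. Write $p = \tau(E_1)$ and $q = \tau(F_1)$. Since $\mu_1 > \mu_2$, the non-increasing right-continuous spectral weight function of $S$ is the step function
\[ g(r) = \begin{cases} \mu_1, & 0 \le r < q,\\ \mu_2, & q \le r \le 1,\end{cases}\]
while the spectral weight function $f$ of $A$ takes the value $\lambda_1$ on $[0,p)$ and $\lambda_2$ on $[p,1]$ (and is the constant $\lambda_1 = \lambda_2$ when $\lambda_1 = \lambda_2$, in which case $p$ is irrelevant). The trace hypothesis $\tau(A) = \tau(S)$ gives $\int_0^1 f\,dm = \int_0^1 g\,dm$, so everything reduces to checking the partial-integral inequality $\int_0^t f\,dm \le \int_0^t g\,dm$ for every $t \in [0,1]$.

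The key observation is that $\mu_1 \ge \lambda_1$, $\mu_2 \le \lambda_2$ and $\lambda_1 \ge \lambda_2$ together force $\mu_2 \le f(r) \le \mu_1$ for every $r$, because $f$ takes only the values $\lambda_1, \lambda_2$ and $\mu_2 \le \lambda_2 \le \lambda_1 \le \mu_1$. Hence $g(r) - f(r) = \mu_1 - f(r) \ge 0$ on $[0,q)$ and $g(r) - f(r) = \mu_2 - f(r) \le 0$ on $[q,1]$; observe that this requires no case analysis comparing $p$ with $q$. Setting $H(t) = \int_0^t (g - f)\,dm$, it follows that $H$ is non-decreasing on $[0,q]$ and non-increasing on $[q,1]$, and since $H(0) = 0$ and $H(1) = 0$ by the trace condition we conclude $H(t) \ge 0$ for all $t \in [0,1]$. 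This is exactly the inequality needed, so $A \prec S$.

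I do not expect a real obstacle here; the argument is essentially a one-line convexity fact about decreasing rearrangements. The only points that deserve a little care are: correctly writing down the spectral weight functions of the two-point-spectrum operators, including the degenerate possibilities $\lambda_1 = \lambda_2$, $q \in \{0,1\}$, or $\mu_2 = 0$ (in each such case the trace identity either forces $A = S$ or the computation above goes through verbatim); and the fact that the reformulation quoted in the excerpt is phrased for positive contractions, so if $\|S\| > 1$ one should either rescale both operators by $\|S\|^{-1}$, which changes neither the hypotheses nor the conclusion, or simply note that the reformulation holds verbatim for bounded positive operators with $[0,1]$ replaced by $[0,\|S\|]$.
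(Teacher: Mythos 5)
Your proof is correct, but it takes a different route from the paper. The paper does not touch the spectral weight functions at all: it applies the affine map $X \mapsto cX + dI$ with $c = 1/(\mu_1-\mu_2)$ and $d = -\mu_2/(\mu_1-\mu_2)$, which turns $S$ into the projection $F_1$, checks from the inequalities $\mu_2 \le \lambda_2 \le \lambda_1 \le \mu_1$ that $cA+dI$ is a positive contraction, and then invokes the single fact that a positive contraction is majorized by any projection of the same trace (together with the affine invariance of $\prec$). Your argument instead verifies the partial-integral reformulation directly: since $f$ is sandwiched between $\mu_2$ and $\mu_1$, the difference $g-f$ is nonnegative on $[0,q)$ and nonpositive on $[q,1]$, so $H(t)=\int_0^t(g-f)\,dm$ rises then falls and, vanishing at both endpoints, is nonnegative throughout. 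The paper's reduction is shorter and reuses a lemma it needs anyway (the same $cS+dI=F_1$ trick reappears in Proposition \ref{prop0}), but it leaves the key majorization fact unproved ("it is easy to see"); your argument is self-contained, handles the degenerate cases explicitly, and isolates the general single-sign-change principle, which is the underlying reason both proofs work. Your remark about rescaling to contractions (or extending the reformulation to $[0,\|S\|]$) correctly addresses the only mismatch with the statement of the reformulation as quoted.
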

\begin{proof}

It is easy to see that if $B$ is a positive contraction, then $B \prec P$ for any projection $P$ with $\tau(P) = \tau(B)$. Let $c = \dfrac{1}{\mu_1 - \mu_2}$ and $d =  - \dfrac{\mu_2}{\mu_1 - \mu_2}$. The operator $cS + dI$ may be checked to equal $F_1$, is hence a projection and of course, $\tau(cS+dI) = \tau(cA+dI)$. 
\[cA + dI = (c \lambda_1 +d)E_{1} + (c \lambda_2 + d)E_{2} = \dfrac{\lambda_1-\mu_2}{\mu_1-\mu_2} E_{1} +  \dfrac{\lambda_2-\mu_2}{\mu_1-\mu_2} E_{2}\]

Since $\lambda_2 \leq \lambda_1$, $\lambda_2 \geq \mu_2$ and $\lambda_1 \leq \mu_1$, we have that 
\[0 \leq \dfrac{\lambda_2-\mu_2}{\mu_1-\mu_2} \leq \dfrac{\lambda_1-\mu_2}{\mu_1-\mu_2} \leq \dfrac{\mu_1-\mu_2}{\mu_1-\mu_2} = 1\]
And thus, $cA + dI$ is a positive contraction. By the observation in the first line of the proof, $cA + dI \prec cS + dI$ and therefore, $A \prec S$. 
\end{proof}

\begin{lemma}\label{2ptMajCond}
 Let $A = \lambda_1 E_1 + \lambda_2 E_2$ and $S = \mu_1 E_1 + \mu_2 E_2$ where $E_1$ and $E_2$ are orthogonal projections summing up to $I$, be positive operators in a type $II_1$ factor $\mathcal{M}$, with the same trace. If $\lambda_1 \leq \mu_1$, then $A \prec S$. 
\end{lemma}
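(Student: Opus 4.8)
The plan is to reduce the statement to Lemma~\ref{2ptSp}. Since $A$ and $S$ are built from the \emph{same} pair of spectral projections $E_1, E_2$, they commute, and the only hypothesis of Lemma~\ref{2ptSp} (applied with $F_i = E_i$) that is not handed to us directly is the inequality $\mu_2 \le \lambda_2$. I would recover this from the trace constraint: writing $t = \tau(E_1)$ and $1-t = \tau(E_2)$, the identity $\tau(A) = \tau(S)$ reads $(\mu_1 - \lambda_1)t = (\lambda_2 - \mu_2)(1-t)$, and since $\mu_1 \ge \lambda_1$ the left-hand side is nonnegative, forcing $\lambda_2 \ge \mu_2$ as soon as $E_2 \ne 0$.

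Before that, I would clear away the degenerate cases. If $E_1 = 0$ or $E_2 = 0$ then $A$ and $S$ are scalars of equal trace, hence equal, and there is nothing to prove. If $\mu_1 = \mu_2$ then $S = \mu_1 I$; together with $\lambda_1 \le \mu_1$ and the standing convention $\lambda_1 \ge \lambda_2 \ge 0$, $\mu_1 \ge \mu_2 \ge 0$ inherited from Lemma~\ref{2ptSp}, the trace identity pins $\lambda_1 = \lambda_2 = \mu_1$, so again $A = S$. In the remaining case $\mu_1 > \mu_2$ with $E_1, E_2 \ne 0$, all the hypotheses of Lemma~\ref{2ptSp} are in force — $\lambda_1 \ge \lambda_2 \ge 0$, $\mu_1 > \mu_2 \ge 0$, $\tau(A) = \tau(S)$, $\mu_1 \ge \lambda_1$, and the newly derived $\mu_2 \le \lambda_2$ — so Lemma~\ref{2ptSp} yields $A \prec S$.

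One can also avoid quoting Lemma~\ref{2ptSp} and simply rerun its argument in this special case: with $c = (\mu_1 - \mu_2)^{-1}$ and $d = -\mu_2(\mu_1 - \mu_2)^{-1}$, the operator $cS + dI$ collapses to the projection $E_1$, while $cA + dI = \frac{\lambda_1 - \mu_2}{\mu_1 - \mu_2}E_1 + \frac{\lambda_2 - \mu_2}{\mu_1 - \mu_2}E_2$ has both coefficients in $[0,1]$ precisely because $\mu_2 \le \lambda_2 \le \lambda_1 \le \mu_1$, hence is a positive contraction. A positive contraction is majorized by any projection of the same trace, so $cA + dI \prec cS + dI$, and undoing the affine change $X \mapsto cX + dI$ (majorization being invariant under positive affine maps) gives $A \prec S$.

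The mathematics here is entirely subordinate to Lemma~\ref{2ptSp}; the one point that requires care is that the conclusion genuinely depends on the eigenvalues being listed in non-increasing order. Without $\lambda_1 \ge \lambda_2$ the statement is false — one can cook up a two-atom $A$ with $\lambda_1$ small, $\lambda_2$ large whose partial-sum (spectral-weight) integral at $t = \tau(E_1)$ exceeds that of $S$ — so the proof must, and does, use that convention both when extracting $\mu_2 \le \lambda_2$ and when checking the coefficients above lie below $1$. Everything else is the translation- and scaling-invariance of $\prec$ together with the elementary fact, already invoked in the proof of Lemma~\ref{2ptSp}, that positive contractions are majorized by equal-trace projections.
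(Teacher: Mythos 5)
Your proof is correct and follows the same route as the paper: extract $\mu_2 \le \lambda_2$ from the trace identity together with $\lambda_1 \le \mu_1$, then invoke Lemma~\ref{2ptSp}. The paper's own proof is just the two-line version of this ("It is easy to see that we must have $\mu_2 < \lambda_2$; the lemma now follows from Lemma~\ref{2ptSp}"), and your treatment of the degenerate cases and of the implicit ordering convention simply makes explicit what the paper leaves to the reader.
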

\begin{proof}
 It is easy to see that we must have $\mu_2 < \lambda_2$. The lemma now follows from lemma(\ref{2ptSp}).
\end{proof}

\begin{lemma}\label{2ptMaj}
Let $A$ be a self-adjoint operator and $S$ a positive contraction in a $II_1$ factor so that $A \prec S$. Then $A$ is a positive contraction as well. 
\end{lemma}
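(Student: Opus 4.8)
The plan is to invoke the defining property of majorization directly, testing it against two explicit convex functions. Recall that $A \prec S$ means $\tau(f(A)) \le \tau(f(S))$ for every continuous convex real-valued $f$ on a closed interval $[c,d]$ containing the spectra of both $A$ and $S$. First I would fix such an interval with $c \le 0$ and $d \ge 1$, which is legitimate since $\operatorname{spec}(S) \subseteq [0,1]$ (as $S$ is a positive contraction) and $\operatorname{spec}(A)$ is bounded; every function used below is convex on all of $\mathbb{R}$, hence a fortiori on $[c,d]$.

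For positivity of $A$, apply the inequality to the convex function $f(x) = \max(-x,0)$. Since $S \ge 0$, functional calculus gives $f(S) = 0$, so $\tau(f(A)) \le \tau(f(S)) = 0$. On the other hand $f(A) \ge 0$, and the trace on a $II_1$ factor is faithful, so $f(A) = 0$, i.e. $A \ge 0$. Concretely, were $A \not\ge 0$ we would have $E_A((-\infty,-\epsilon]) \ne 0$ for some $\epsilon > 0$, whence $f(A) \ge \epsilon\, E_A((-\infty,-\epsilon])$ and $\tau(f(A)) \ge \epsilon\,\tau(E_A((-\infty,-\epsilon])) > 0$, a contradiction.

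For the upper bound, repeat the argument with $g(x) = \max(x-1,0)$, again continuous and convex. Since $S \le I$ we get $g(S) = 0$, hence $\tau(g(A)) \le 0$; faithfulness of $\tau$ then forces $g(A) = 0$, that is $A \le I$. Combining the two steps, $A$ is a positive contraction.

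I do not expect a real obstacle here: the proof is essentially two applications of the continuous functional calculus plus faithfulness of $\tau$. The only points worth stating explicitly are the choice of a large enough ambient interval $[c,d]$ so that the test functions are genuinely ``defined on a closed interval containing both spectra,'' and the use of faithfulness of the trace (automatic in a $II_1$ factor) to pass from $\tau(f(A)) = 0$ to $f(A) = 0$.
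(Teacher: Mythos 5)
Your proof is correct; the paper itself dismisses this lemma with the single line ``Routine verification,'' and your argument --- testing the majorization inequality against the convex functions $\max(-x,0)$ and $\max(x-1,0)$ and then using faithfulness of the trace --- is exactly the standard verification being alluded to. Nothing is missing: the choice of a common interval $[c,d] \supseteq [0,1]$ containing both spectra and the passage from $\tau(f(A))=0$ to $f(A)=0$ are the only points that need care, and you handle both.
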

\begin{proof}
 Routine verification.
\end{proof}

\begin{prop}\label{prop0}
Let $S$ be a positive operator in $\mathcal{M}$ with two point spectrum and let $A$ be a positive contraction in $\mathcal{A}$ that has finite spectrum and so that $A \prec S$. Then, there is a unitary $U$ in $\mathcal{M}$ such that $E(U S U^{*}) = A$. 
\end{prop}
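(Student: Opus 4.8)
The plan is to reduce the statement to Proposition \ref{prop00} via an affine change of variable that turns the two‑point operator $S$ into a projection. Write $S = \mu_1 F_1 \oplus \mu_2 F_2$ with $F_1 + F_2 = I$ and $\mu_1 > \mu_2 \ge 0$, and set
\[ c = \frac{1}{\mu_1 - \mu_2} > 0, \qquad d = -\frac{\mu_2}{\mu_1-\mu_2} \le 0, \]
so that $F := cS + dI = F_1$ is a projection and $S = c^{-1}F_1 - c^{-1}d\,I$. Since $g(x) = cx+d$ is affine and $f \circ g$ is convex whenever $f$ is, majorization is preserved: from $A \prec S$ we get $A' := cA + dI \prec cS + dI = F_1$. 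As $F_1$ is a positive contraction, Lemma \ref{2ptMaj} shows $A'$ is a positive contraction; it still has finite spectrum, and taking traces in $A' \prec F_1$ gives $\tau(A') = \tau(F_1)$.

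Next I would apply Proposition \ref{prop00} to $A'$. Writing $A = \sum_i \lambda_i E_i$ for the spectral decomposition of $A$ (a finite sum of scalars times orthogonal projections in $\mathcal{A}$ summing to $I$), we have $A' = \sum_i (c\lambda_i + d)E_i$ with each coefficient $c\lambda_i+d \in [0,1]$, so Proposition \ref{prop00} produces a projection $P \in \mathcal{M}$ with $E(P) = A'$. Taking traces, $\tau(P) = \tau(A') = \tau(F_1)$; since $\mathcal{M}$ is a $II_1$ factor, $P$ and $F_1$ are Murray--von Neumann equivalent, hence there is a unitary $U \in \mathcal{M}$ with $U F_1 U^* = P$.

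Finally I would undo the change of variable using only linearity and unitality of $E$:
\[ E(USU^*) = c^{-1}E(UF_1U^*) - c^{-1}d\,I = c^{-1}E(P) - c^{-1}d\,I = c^{-1}A' - c^{-1}d\,I = A, \]
which is the desired conclusion. The argument has no serious obstacle; the only points needing care are (i) checking that the rescaled element $A'$ remains a positive contraction, which is handled by observing that affine maps preserve majorization and then quoting Lemma \ref{2ptMaj}, and (ii) pushing the change of variable back through $E$ via linearity and unitality rather than attempting to commute $E$ with conjugation by $U$, which would be illegitimate since in general $U \notin \mathcal{A}$.
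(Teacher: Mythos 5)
Your proof is correct and follows essentially the same route as the paper: the affine substitution $F_1 = cS+dI$, preservation of majorization under affine maps, Lemma \ref{2ptMaj} to see $cA+dI$ is a positive contraction, and Proposition \ref{prop00}. In fact you are slightly more careful than the paper, which jumps from the projection $P$ supplied by Proposition \ref{prop00} directly to a unitary; your explicit step that $\tau(P)=\tau(F_1)$ forces $UF_1U^*=P$ for some unitary $U$ in the $II_1$ factor is exactly the detail being elided there.
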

\begin{proof} 
Write $S = \mu_1 F_1 \oplus \mu_2 F_2$ where $\mu_1 \geq \mu_2$ and $F_1 \oplus F_2 = I$. Let $c = \dfrac{1}{\mu_1 - \mu_2}$(note that $c>0$) and $d =  - \dfrac{\mu_2}{\mu_1 - \mu_2}$. The operator $cS + dI$ may be checked to equal $F_1$ and is hence a projection. We also have that $cA+dI \prec cS+dI = F_1$. By lemma(\ref{2ptMaj}), $cA+dI$ must actually be a positive contraction. Also, of course, $\tau(cS+dI) = \tau(cA+dI)$. Now, by proposition(\ref{prop00}), there is a unitary $U$ so that $E(U(cS+dI)U^{*}) = cA+dI$. And hence, $E(USU^{*})=A$.

\end{proof}

When one or both operators have finite spectrum, majorization reduces to a simple condition.
\begin{lemma}\label{atoMaj}
 Let $A, S$ be positive operators in a $II_1$ factor with $\tau(A) = \tau(S)$ and let $f, g$ be the spectral weight functions of $A, S$ respectively, as above. Suppose $A$ has finite spectrum, i.e, the spectral weight function $f$ has the form
\[f = \sum_{n=1}^{N} \lambda_n \chi_{[s_{n-1},s_n)}\]
for some natural number $N$ and some sequences $0 = s_0 < s_1 < \cdots < s_N = 1$ and $0 \leq \lambda_N < \lambda_{N-1} < \cdots < \lambda_1$. Then, $A \prec S$ iff for $n = 1, 2, \cdots, N$,
\[\int_{0}^{s_n} f(r) dm(r) \leq \int_{0}^{s_n} g(r)dm(r) \,\, \text{ or equivalently, } \,\, \tau(A E_{A}([0,s_n))) \leq \tau(S E_{S}([0,s_n))) \]
\end{lemma}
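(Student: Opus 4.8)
The plan is to unwind the definition of majorization in terms of the spectral weight functions and reduce the whole statement to an elementary piecewise-concavity argument. Set $\phi(t) = \int_{0}^{t} g(r)\,dm(r) - \int_{0}^{t} f(r)\,dm(r)$ for $t \in [0,1]$. Since $\tau(A) = \tau(S)$ gives $\int_{0}^{1} f = \int_{0}^{1} g$, the reformulation of majorization recalled just above says precisely that $A \prec S$ if and only if $\phi(t) \geq 0$ for every $t \in [0,1]$. The forward implication of the lemma is then immediate: if $\phi \geq 0$ everywhere, it certainly holds at the points $t = s_{n}$. (Note also that the inequality at $n = N$ reads $\int_{0}^{1} f \leq \int_{0}^{1} g$, which is already forced by $\tau(A) = \tau(S)$, so the genuine content sits in $n = 1, \dots, N-1$.) Everything therefore rests on the converse.

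For the converse I would argue one subinterval at a time. Fix $n \in \{1, \dots, N\}$ and restrict attention to $t \in [s_{n-1}, s_{n}]$. On this interval $f$ is constantly equal to $\lambda_{n}$, so for such $t$ we have $\phi(t) = \phi(s_{n-1}) + \int_{s_{n-1}}^{t} (g(r) - \lambda_{n})\,dm(r)$. Because $g$ is non-increasing, the integrand $r \mapsto g(r) - \lambda_{n}$ is non-increasing on $[s_{n-1}, s_{n}]$, hence $t \mapsto \phi(t)$ is concave there; being also continuous, it attains its minimum over $[s_{n-1}, s_{n}]$ at one of the two endpoints. Since $\phi(s_{n-1}) \geq 0$ (this is $\phi(0) = 0$ when $n = 1$, and the standing hypothesis when $n \geq 2$) and $\phi(s_{n}) \geq 0$ by hypothesis, we conclude $\phi \geq 0$ on all of $[s_{n-1}, s_{n}]$. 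Letting $n$ run through $1, \dots, N$ and using $s_{0} = 0$, $s_{N} = 1$ exhausts $[0,1]$, so $\phi \geq 0$ on $[0,1]$ and $A \prec S$.

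Finally, the reformulation in terms of traces is just a restatement of how the spectral weight function encodes the operator: $\int_{0}^{s_{n}} f(r)\,dm(r)$ is the trace of $A$ compressed to the spectral projection carrying its $s_{n}$ largest units of spectral mass, and likewise with $S$ and $g$; since the breakpoints $s_{n}$ are exactly the places where the spectral mass of $A$ accumulates, this converts the integral inequalities at the $s_{n}$ into the displayed trace inequalities. I do not foresee a serious obstacle here — the only place demanding a little care is keeping the right-continuity/endpoint conventions of the spectral weight functions straight (so that "$f \equiv \lambda_{n}$ on $[s_{n-1}, s_{n})$" and the integral identities hold literally) and making the spectral-projection identification in the last step precise. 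The essential mechanism is simply that $f$ is piecewise constant while $g$ is monotone, which makes $\phi$ piecewise concave, and a continuous piecewise-concave function that is nonnegative at every breakpoint is nonnegative.
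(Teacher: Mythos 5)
Your argument is correct. The paper dismisses this lemma with ``Routine verification,'' so there is no written proof to diverge from; your piecewise-concavity argument --- on each interval where $f$ is constant, $\phi(t)=\int_0^t(g-f)\,dm$ is the integral of a non-increasing function and hence concave, and a continuous concave function that is nonnegative at both endpoints is nonnegative throughout --- is precisely the verification the authors intend, and you are also right that the only point demanding care is the paper's somewhat abusive identification of the integral inequalities at the breakpoints $s_n$ with the displayed trace inequalities $\tau(AE_A([0,s_n)))\leq\tau(SE_S([0,s_n)))$, where the $s_n$ are points in the domain of the weight functions rather than spectral values.
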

\begin{proof}
Routine verification. 

\end{proof}

We now prove the promised special case of the Schur-Horn theorem.

\begin{theorem}[The Schur-Horn theorem for operators with finite spectrum in a $II_1$ factor]\label{SHD}
Let $A$ and $S$ be positive operators with finite spectrum in $\mathcal{A}$ and $\mathcal{M}$ respectively and so that $A \prec S$. Then, there is a unitary $U$ in $\mathcal{M}$ so that $E(U S U^{*}) = A$. 
\end{theorem}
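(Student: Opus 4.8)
The plan is to induct on $N+M$, where $N$ and $M$ are the numbers of distinct points in the spectra of $A$ and $S$ respectively. After rescaling we may assume $\|S\|\le 1$, so that (by Lemma \ref{2ptMaj}, once we are in a two--point corner, or directly) $A$ and $S$ are positive contractions; write $A=\sum_{n=1}^{N}\lambda_n E_n$ and $S=\sum_{j=1}^{M}\mu_j F_j$ with $\lambda_1>\cdots>\lambda_N\ge 0$, $\mu_1>\cdots>\mu_M\ge 0$, and $E_n\in\mathcal{A}$, $F_j\in\mathcal{M}$ the corresponding spectral projections. The base case $M\le 2$ is precisely Proposition \ref{prop0} (if $M=1$ then $A\prec S$ forces $A=\mu_1 I$ and there is nothing to do). Two preliminary observations, both immediate from $A\prec S$: first, $\mu_M\le\lambda_N\le\lambda_1\le\mu_1$ (apply the definition of majorization to the convex functions $(x-\mu_1)_+$ and $(\mu_M-x)_+$); second, by Lemma \ref{atoMaj}, $A\prec S$ is equivalent to $\tau(A)=\tau(S)$ together with the finite list of inequalities $\tau\big(A\,E_A([0,s_n))\big)\le\tau\big(S\,E_S([0,s_n))\big)$ at the interior jump points $s_n=\tau(E_1\oplus\cdots\oplus E_n)$, $n=1,\dots,N-1$, of the spectral weight function of $A$.

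First I would dispose of the case in which one of these inequalities is an equality, say at $s=s_n$ for some $1\le n\le N-1$. Put $P=E_1\oplus\cdots\oplus E_n\in\mathcal{A}$, a projection of trace $s$, and on the $S$--side choose a spectral projection of $S$ of trace $s$ — splitting one of the $F_j$ into two orthogonal subprojections if $s$ is not already a partial sum of the $\tau(F_j)$ — and use a unitary $W\in\mathcal{M}$ to carry it onto $P$. Then $WSW^{*}=\tilde{S}_1\oplus\tilde{S}_2$ with $\tilde{S}_1=P\,WSW^{*}P$ and $\tilde{S}_2=(I-P)\,WSW^{*}(I-P)$. Because $\mathcal{A}P$ is a masa in the $II_1$ factor $P\mathcal{M}P$ with conditional expectation the restriction of $E$, and likewise on $I-P$, it suffices to solve $PAP\prec\tilde{S}_1$ inside $P\mathcal{M}P$ and $(I-P)A(I-P)\prec\tilde{S}_2$ inside $(I-P)\mathcal{M}(I-P)$; that these two majorizations hold is a short computation with the weight functions using $A\prec S$ and the equality at $s$ (the equality is exactly what makes the traces match in the two corners and keeps the lower corner majorized). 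Each corner problem involves at most $N-1$ distinct eigenvalues on the $A$--side (namely $n$ and $N-n$) and at most $M$ on the $S$--side, hence has strictly smaller invariant; the inductive hypothesis applies, and recombining the two corner unitaries with $W$ produces $U$.

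It remains to treat the case in which all the interior inequalities are strict — this is where the real work lies. Here I would peel off the top eigenvalue $\lambda_1$ of $A$ directly from $S$. Choose the index $j$ with $\mu_{j+1}\le\lambda_1\le\mu_j$ and cut out a spectral projection $G$ of $S$ of trace $\ge\tau(E_1)$ consisting of the eigenspaces of $S$ at levels $\ge\lambda_1$ together with just enough of the next one; inside the $II_1$ factor $G\mathcal{M}G$ one rotates $S|_G$ — exactly in the two--point spirit of Lemma \ref{simLem} and Proposition \ref{prop0} — so as to create a projection of trace $\tau(E_1)$ in $\mathcal{A}G$ on which the conditional expectation of the rotated operator equals $\lambda_1$ times that projection, the complementary part retaining finite spectrum. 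Conjugating globally so that this projection is $E_1$ itself, we pass to $(I-E_1)\mathcal{M}(I-E_1)$: the restriction of $A$ there has only $N-1$ distinct eigenvalues, the rotated $S$ still has finitely many, and — using the slack provided by strictness — one checks that the resulting majorization is preserved, so the invariant has dropped and induction finishes the argument. The main obstacle is precisely this step: organizing the rotation inside $G\mathcal{M}G$ so that the leftover operator has few enough (and only finitely many) distinct spectral values and so that majorization genuinely descends to the corner. When $\tau(E_1)$ is large this cannot be done with a single two--point rotation, and one must iterate a scalar--extraction procedure in the spirit of the proof of the scalar--lifting theorem of the previous section; keeping the count of distinct eigenvalues under control in every branch, so that $N+M$ strictly decreases, is the delicate point.
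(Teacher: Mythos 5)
Your base case, your preliminary observations, and your reduction via a point of equality in the interior majorization inequalities (splitting into two corner problems of smaller size) are all sound. The gap is in the remaining case, where all interior inequalities are strict, and you have in fact named it yourself: the step ``rotate $S|_G$ inside $G\mathcal{M}G$ so as to create a corner on which the expectation is $\lambda_1$ times a projection of trace $\tau(E_1)$, with the leftover retaining finite spectrum'' is not delivered by any of the lemmas you invoke. Lemma \ref{simLem} and Proposition \ref{prop0} extract a constant expectation only from an operator with \emph{two-point} spectrum, whereas your $S|_G$ has up to $j+1$ distinct spectral values; what you are asking for in $G\mathcal{M}G$ is therefore an instance of the very theorem being proved (with $A$-side equal to $\lambda_1 Q$ plus an unspecified finite-spectrum remainder). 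Moreover the induction on $N+M$ does not obviously close: $N$ drops by one, but you have no control over the number of distinct spectral values of $(G-Q)USU^{*}(G-Q)$, so $M$ may grow and the invariant need not decrease. You flag this as ``the delicate point,'' but it is the entire content of the theorem, and as written the argument is circular, or at best incomplete, at exactly this step.

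The paper sidesteps the difficulty with a different reduction. Since $S$ may be conjugated by any unitary, it is first conjugated into the masa $\mathcal{A}$ so that both $A$ and $S$ are aligned along a common maximal nest $\{P_t\}$; one then takes the common refinement $\{r_1 < \cdots < r_L\}$ of the two spectral partitions and inducts on its length $L$ rather than on $N+M$. The partial-sum inequalities produce an index $l$ with $\gamma_l < \delta_l$ and $\gamma_{l+1} > \delta_{l+1}$, and one peels off $A_1=\gamma_l(P_{r_l}-P_{r_{l-1}})+\gamma_{l+1}(P_r - P_{r_l})$ against $S_1=\delta_l(P_{r_l}-P_{r_{l-1}})+\delta_{l+1}(P_r-P_{r_l})$, with $r$ chosen to equalize traces: here \emph{both} operators have two-point spectrum in the corner $P\mathcal{M}P$, so Proposition \ref{prop0} applies verbatim (via Lemma \ref{2ptMajCond}), and the remainders satisfy $A_2\prec S_2$ on a partition of length exactly $L-1$. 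If you wish to keep your $N+M$ induction, you would have to carry out the iterated scalar-extraction you allude to and prove that it terminates with a finite-spectrum leftover of controlled size; it is much cleaner to switch to the common-refinement invariant, which guarantees that each peeled-off piece is genuinely a two-point problem on both sides.
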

\begin{proof}
We assume that $A$ and $S$ have spectrum consisting of $N$ and $M$ points respectively. Write $A = \sum_{n=1}^{N} \lambda_n E_n$ and $S = \sum_{n=1}^{M} \mu_n F_n$ where the $\{\lambda_n\}_{1}^{N}$(respectively, the $\{\mu_n\}_{1}^{M}$) are distinct. We may  assume that none of the $\lambda_i$ equal any of the $\mu_j$. For suppose $\lambda_i = \mu_j$. Assume that $\tau(E_i) \leq \tau(F_j)$, the other case is handled similarly.  We may, after conjugating by a unitary, write $A = \lambda_i E_i \oplus (A-\lambda_i E_i)$ and $S = \mu_j E_i \oplus (S - \mu_j E_i) = \lambda_i E_i \oplus (S - \lambda_i E_i)$. Clearly, $A-\lambda_i E_i \prec S - \lambda_i E_i$ and it is enough to prove the theorem for $A-\lambda_i E_i$ which has at most $N-1$ point spectrum in $(I-E_i)\mathcal{A}$ and  $S - \lambda_i E_i$ which has at most $M$ point spectrum inside $(I-E_i) \mathcal{M} (I-E_i)$. We therefore assume that none of the $\lambda_i$ equal any of the $\mu_j$.

Since $\mathcal{A}$ is unitarily equivalent to $L^{\infty}([0,1],dm)$, we may find a maximal nest of projections $\{P_{t} : 0 \leq t \leq 1\}$ in $\mathcal{A}$ with $P_{t} \leq P_{s}$ for $0 \leq t \leq s \leq 1$ and $\tau(P_{t}) = t$ for $0 \leq t \leq 1$. Since $A$(respectively $S$) has $N$(respectively $M$) point spectrum, we may, after conjugating $A$ and $S$ by unitaries, assume that $A$ and $S$ have the form
\[A = \sum_{n=1}^{N} \lambda_n (P_{s_{n}}-P_{s_{n-1}}) \quad \text{ and } \quad S = \sum_{n=1}^{M} \mu_n (P_{t_{n}} - P_{t_{n-1}})\]
for sequences $0 = s_{0} < s_{1} < s_{2} < \cdots < s_{N} = 1$ and $0 = t_{0} < t_{1} < t_{2} < \cdots < t_{M} = 1$ and positive scalars $\lambda_1 > \lambda_2 > \cdots > \lambda_N \geq 0$ and $\mu_1 > \mu_2 > \cdots > \mu_{M} \geq 0$.

 Reindex the set $\{s_{1}, \cdots, s_{N-1}\} \cup \{t_1, \cdots, t_{M-1}\}$ by $\{r_1, \cdots, r_{L}\}$ where $r_1 < r_2 < \cdots < r_{L-1}$ and let $r_{L} = 1$. Then, we may write 
\[A = \sum_{n=1}^{L} \gamma_n (P_{r_{n}} - P_{r_{n-1}}) \quad \text{ and } \quad f = \sum_{n=1}^{L} \delta_n (P_{r_{n}} - P_{r_{n-1}})\]
where $\gamma_n = \lambda_m$ for the unique value $m$ so that $[r_{n-1},r_{n}) \subset [s_{m-1},s_{m})$ and similarly for the numbers $\delta_n$.

We will prove the theorem by induction on $L$. When $L=1$, $A$ and $S$ are scalars and thus, $A = S = \tau(A)I$ and the theorem is trivial. Assume we have shown the following:
\begin{stmt}
Let $A$ and $S$ be positive operators inside a masa, which we denote by $\mathcal{A}$ inside a type $II_1$ factor, which we denote by $\mathcal{M}$, so that $A = \sum_{n=1}^{K} \gamma_{n} (P_{r_{n}} - P_{r_{n-1}})$ and $S = \sum_{n=1}^{K} \delta_{n} (P_{r_{n}} - P_{r_{n-1}})$ for some sequences $0 < r_{1} < \cdots <r_{K-1} < r_{K} = 1$, $\gamma_{1} \geq \cdots \geq \gamma_{K}$, $\delta_{1} \geq \cdots \geq \delta_{K}$, where $K$ is a natural number less than $L$. Then, there is a unitary $U$ so that $E(U S U^{*}) = A$.
\end{stmt}

We will now show that we can extend this to the case when the decompositions have length $L$ as well. 

The majorization condition for the operators $A$ and $S$ that we are working with becomes the following: $\tau(A) = \tau(S)$ and for every $k = 1, \cdots, L-1$, we have that
\[\int_{0}^{r_{k}} f(r)dm(r) = \sum_{n=1}^{k} \gamma_n (r_{n}-r_{n-1})  \leq \sum_{n=1}^{k} \delta_n (r_{n}-r_{n-1}) = \int_{0}^{r_{k}} g(r)dm(r)\]
 In particular, $\gamma_1 < \delta_1$. If $\gamma_n < \delta_n$ for every $n = 1, \cdots, L$, then, 
\[\tau(A) = \sum_{n=1}^{L} \gamma_n (r_{n}-r_{n-1}) <  \sum_{n=1}^{L} \delta_n (r_{n}-r_{n-1}) = \tau(S)\]
which contradicts the fact that $A \prec S$(which entails that $\tau(A) = \tau(S)$. Thus, there is a natural number $1 < l \leq L$ so that 
\[\gamma_n < \delta_n \,\, \text{ for } \,\, n = 1, \cdots, l \quad \text{ and } \quad \gamma_{l+1} > \delta_{l+1}.\]
 Suppose that $(\delta_{l} - \gamma_{l}) (r_{l}-r_{l-1}) > (\gamma_{l+1} - \delta_{l+1}) (r_{l+1}-r_{l})$(the other case is handled similarly). Pick $r$ so that $(\delta_{l} - \gamma_{l}) (r_{l}-r_{l-1}) = (\gamma_{l+1} - \delta_{l+1}) (r-r_{l})$. Let 
\[A_1: = \gamma_{l} (P_{r_{l}} - P_{r_{l-1}}) + \gamma_{l+1} (P_{r}-P_{r_{l}}) \quad \text{ and } \quad S_1:= \delta_{l} (P_{r_{l}} - P_{r_{l-1}}) + \delta_{l+1} (P_{r}-P_{r_{l}})\]
Then,
\[\tau(S_1 - A_1) = (\delta_l - \gamma_l)(r_{l} - r_{l-1}) + (\delta_{l+1}-\gamma_{l+1})(r - r_{l}) = 0 \]
 Combining this with the fact that $\gamma_l < \delta_l$ and using lemma(\ref{2ptMajCond}), we conclude that 
\[A_{1} \prec S_{1}\] inside the $II_1$ factor $P\mathcal{M}P$ where $P$ is the projection $P = P_{r}-P_{r_{l-1}}$. Now, let 
\[A_{2}:= A - A_{1} = \sum_{n\neq l,l+1} \gamma_{n} (P_{r_{n}} - P_{r_{n-1}}) + \gamma_{l+1} (P_{r_{l+1}}-P_{r})\] and similarly, 
\[S_{2} := S - S_{1} =  \sum_{n\neq l,l+1} \delta_{n} (P_{r_{n}} - P_{r_{n-1}}) + \delta_{l+1} (P_{r_{l+1}}-P_{r}) \]
where the operators are considered in $(I-P)\mathcal{M}(I-P)$. We have
\begin{enumerate}
 \item $\sum_{n=1}^{k} \gamma_n (r_{n}-r_{n-1}) <  \sum_{n=1}^{k} \delta_n (r_{n}-r_{n-1})$ for $k = 1, \cdots, l-1$. 
 \item And for $k \geq l+1$, (if $k = l+1$, the third term in the first expression below will not show up)
\begin{eqnarray*}
 &&\sum_{n=1}^{l-1} \gamma_{n} (r_{n}-r_{n-1}) + \gamma_{l+1} (P_{r_{l+1}}-P_{r}) + \sum_{n=l+2}^{k} \gamma_{n} (r_{n}-r_{n-1})\, = \, \sum_{n=1}^{l+1} \gamma_{n} (r_{n}-r_{n-1})  \\ 
 &&< \sum_{n=1}^{l+1} \delta_{n} (r_{n}-r_{n-1})\\
 &&= \sum_{n\neq l,l+1} \gamma_{n} (P_{r_{n}} - P_{r_{n-1}}) + \gamma_{l+1} (r_{n}-r_{n-1}) + \sum_{n=l+2}^{L-1} \delta_{n} (r_{n}-r_{n-1})
\end{eqnarray*}
since $(\gamma_{l} - \delta_{l}) (r_{l}-r_{l-1}) + (\gamma_{l+1} - \delta_{l+1}) (r-r_{l}) = 0$.
\item $\tau(A_2) = \tau(A) - \tau(A_{1}) = \tau(S) - \tau(S_{1}) = \tau(S_{2})$.
\end{enumerate}
We thus conclude that we also have that 
\[A_{2} \prec S_{2}\]

By proposition(\ref{prop0}), there is a unitary $U_1$ inside $P\mathcal{M}P$ so that $E(U_{1} S_{1} U_{1}^{*}) = A_{1}$. Also, the induction hypothesis holds for the operators $A_{2}$ and $M_{2}$ inside $(I-P)\mathcal{M}(I-P)$ since the partition decomposition for $A_2$ and $S_2$ has length $L-1$. We may therefore find a unitary $U_2$ inside  so that $E(U_{2} S_{2} U_{2}^{*}) = A_{2}$. Thus, letting $U = U_{1} \oplus U_{2}$, we have that $E(U S U^{*}) = A$. 
\end{proof}

\section{An approximate Schur-Horn theorem}

Theorem(\ref{SHD}) allows us to prove an approximate version of the Schur-Horn theorem for general operators. 
\begin{theorem}
 Let $S$ be a positive operator in a $II_1$ factor $\mathcal{M}$ and let $\mathcal{A}$ be a masa in $\mathcal{M}$. Then, the norm closure of $E(\mathcal{U}(S))$ equals  $\{A \in \mathcal{A}^{+} \mid A \prec S\}$. 
\end{theorem}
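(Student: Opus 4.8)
The plan is to prove the two inclusions of the asserted equality separately, writing $\mathcal{U}(S)=\{\,USU^{\ast}:U\in\mathcal{U}(\mathcal{M})\,\}$ for the unitary orbit of $S$.

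\emph{The easy inclusion} $\overline{E(\mathcal{U}(S))}^{\,\|\cdot\|}\subseteq\{A\in\mathcal{A}^{+}:A\prec S\}$. For any unitary $U$, conjugation preserves the trace, so $\tau(f(USU^{\ast}))=\tau(f(S))$ for every continuous $f$; and since the conditional expectation $E$ satisfies $f(E(T))\le E(f(T))$ for every continuous convex $f$, we get $\tau(f(E(USU^{\ast})))\le\tau(E(f(USU^{\ast})))=\tau(f(S))$, i.e. $E(USU^{\ast})\prec S$. Hence $E(\mathcal{U}(S))\subseteq\{A\in\mathcal{A}^{+}:A\prec S\}$, and this set is norm closed, because positivity passes to norm limits and, for each fixed continuous convex $f$ on an interval containing all the relevant spectra, $A\mapsto\tau(f(A))$ is norm continuous, so each inequality $\tau(f(A))\le\tau(f(S))$ survives. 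Taking the norm closure gives the inclusion.

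\emph{The substantive inclusion} $\{A\in\mathcal{A}^{+}:A\prec S\}\subseteq\overline{E(\mathcal{U}(S))}^{\,\|\cdot\|}$. Fix $A\in\mathcal{A}^{+}$ with $A\prec S$ and $\epsilon>0$. The idea is to replace $A$ and $S$ by finite--spectrum operators that are uniformly $\epsilon$--close and placed \emph{on the correct side} of the majorization order, then invoke Theorem \ref{SHD}. I would establish two approximation facts.
\begin{enumerate}
\item There is a finite--spectrum $A'\in\mathcal{A}^{+}$ with $\|A-A'\|\le\epsilon$ and $A'\prec A$. Cover $\sigma(A)$ by finitely many half--open slabs of length $<\epsilon$, let $p_{1},\dots,p_{K}$ be the corresponding spectral projections of $A$, and set $A'=E'(A)=\sum_{j}\frac{\tau(p_{j}Ap_{j})}{\tau(p_{j})}\,p_{j}$, where $E'$ is the trace preserving conditional expectation of $\mathcal{M}$ onto the finite--dimensional abelian algebra generated by the $p_{j}$'s (so $A'\in W^{\ast}(A)\subseteq\mathcal{A}$). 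Each eigenvalue of $A'$ lies in the slab it came from, so $\|A-A'\|<\epsilon$; and $A'\prec A$ since $E'(A)\prec A$ always holds.
\item There is a finite--spectrum positive $S'\in\mathcal{M}$ with $\|S-S'\|\le\epsilon$ and $S\prec S'$. Cut $\sigma(S)$ into half--open slabs of length $<\epsilon$ with spectral projections $q_{1},\dots,q_{L}$; inside each $II_{1}$ factor $q_{j}\mathcal{M}q_{j}$ split $q_{j}$ into two subprojections whose traces are chosen so that replacing the part of $S$ over $q_{j}$ by the two endpoint values of the $j$--th slab keeps $\tau(Sq_{j})$ unchanged, and let $S'$ be the resulting operator. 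Then $\|S-S'\|<\epsilon$, and the Hardy--Littlewood criterion (Lemma \ref{atoMaj}) checked slab by slab gives $S\prec S'$: within each slab the endpoint--valued, front--loaded profile dominates the original partial integrals, with equality at the slab's endpoints, so the inequalities add up globally while the total trace is preserved.
\end{enumerate}
Granting these, $A'\prec A\prec S\prec S'$, so $A'\prec S'$ by transitivity, and both $A'$ and $S'$ have finite spectrum; Theorem \ref{SHD} produces a unitary $U\in\mathcal{M}$ with $E(US'U^{\ast})=A'$, and therefore
\[
\|E(USU^{\ast})-A\|\le\bigl\|E\bigl(U(S-S')U^{\ast}\bigr)\bigr\|+\|A'-A\|\le\|S-S'\|+\epsilon\le 2\epsilon .
\]
Since $\epsilon$ was arbitrary, $A\in\overline{E(\mathcal{U}(S))}^{\,\|\cdot\|}$. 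Together with the first part this proves the theorem; the stated consequence for $\mathcal{O}(S)$ then follows formally, as $E(\mathcal{O}(S))$ lies between $E(\mathcal{U}(S))$ and $\{A\in\mathcal{A}^{+}:A\prec S\}$.

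\emph{Main obstacle.} Item (1) costs nothing — it is literally an application of a conditional expectation — and the rest is the triangle inequality together with Theorem \ref{SHD}. The real work is item (2): perturbing $S$ to a finite--spectrum operator that still majorizes it while staying uniformly close. The naive move, averaging $S$ over a fine partition of the domain $[0,1]$, keeps the majorization order honest but fails to be uniformly close when the spectral distribution of $S$ is singular, which forces one to partition the \emph{range} instead; the bookkeeping verifying that the endpoint--valued, front--loaded replacement genuinely moves $S$ upward in the majorization order — all partial--integral inequalities preserved, total trace unchanged — is the point that needs care.
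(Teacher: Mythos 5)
Your proof is correct, but it routes the key step differently from the paper, and the comparison is instructive. The paper approximates \emph{both} operators from below in the majorization order: it replaces $A$ by a finite--spectrum $B\prec A$ (averaging over spectral slabs of $A$, exactly your item (1)) and $S$ by a finite--spectrum $T\prec S$ (averaging $S$ over blocks $Q_k$), and then must verify the cross--inequality $B\prec T$ by hand; this forces the careful alignment $\tau(Q_k)=\tau(P_k)$ between spectral projections of $S$ and of $A$, followed by an application of Lemma~\ref{atoMaj} to the partial traces. You instead approximate $A$ from below and $S$ from \emph{above} ($S\prec S'$), so that $A'\prec S'$ falls out of transitivity and no alignment between the spectral data of $A$ and $S$ is ever needed; the price is your item (2), the ``front--loaded endpoint'' replacement, whose verification via the non--increasing rearrangement (partial integrals increase within each slab, with equality at slab boundaries) you have correctly identified as the real content and which does go through. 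Your final estimate is also sound: the unitary furnished by Theorem~\ref{SHD} for the pair $(A',S')$ is applied to the genuine $S$, and the error $\|E(U(S-S')U^{*})\|\le\|S-S'\|$ is absorbed. Two small caveats: the operator inequality $f(E(T))\le E(f(T))$ you invoke in the easy inclusion holds in general only for operator convex $f$; what you actually need (and what is true for any unital trace--preserving positive map, by Hiai's results cited in the paper) is the trace version $\tau(f(E(T)))\le\tau(f(T))$, which is exactly the statement $E(USU^{*})\prec S$. Also note the paper itself silently omits this easy inclusion, so you are if anything more complete there.
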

\begin{proof}
Choose $A$ in $\mathcal{A}^{+}$ so that $A \prec S$. By scaling, if needed, we assume that $A$ and $S$ are strict contractions. Fix $n > 0$ and define the mutually orthogonal projections 
\[P_{k} = E_{A}([\frac{k-1}{n},\frac{k}{n})) \quad \text{ for } \quad 1 \leq k \leq n\]
Next, define $\alpha_k = \tau(AP_{k})$ for $1 \leq k \leq n$ and consider the operator $B = \sum_{k=1}^{n} \alpha_k P_{k}$.  Since $\tau(C) I \prec C$ for any positive operator $C$, we have that $B \prec A$ and hence, $B \prec S$. We also have that
\[||A - B|| = ||\sum_{k=1}^{n} (A-\alpha_k) P_{k}|| \leq ||\sum_{k=1}^{n} \dfrac{1}{n} P_{k}|| = \dfrac{1}{n}\]
Choose numbers $0 = t_0 \leq t_1 \leq t_2 \cdots \leq t_n = 1$ and orthogonal projections $Q_1, \cdots, Q_n$ in $\{S^{'} \cap \mathcal{M}\}$ such that $Q_{k} \leq E_{S}([t_{k-1},t_{k}])$ and $\tau(Q_{k}) = \tau(P_{k})$ for $1 \leq k \leq n$. To see why this possible, proceed thus: Let $t_0 = 0$ and pick $t_1$ such that $\tau( E_{S}([0,t_{1})))\leq \tau(P_{1}) \leq \tau( E_{S}([0,t_{1}]))$. If $S$ has no atom at $t_1$, then let $Q_1 =  E_{S}([0,t_{1}))$. If $S$ has an atom at $t_1$, pick a subprojection $R$ of $E_{S}(\{t_1\})$ such that $\tau(E_{S}([0,t_{1}))) + \tau(R) = \tau(P_1)$ and let $Q_1 = E_{S}([0,t_{1})) + R$. Continue this process for $n=2, \cdots$. 

Next, pick positive operators $T_{1}, \cdots, T_{n}$ all with finite spectrum such that for $1 \leq k \leq n$,
\[ T_{k} \prec S Q_{k}, \quad \tau(T_{k}) = \tau(S Q_{k}) \quad \text{ and } ||S Q_{k} - T_{k}|| \leq \dfrac{1}{n}\] 
This is done exactly in the same way as the choice of the operator $B$ given the operator $A$, in the first part of this proof. Let $T$ be the operator $T = T_{1} + \cdots + T_{n}$. Then, the above conditions imply
\[T \prec S\quad \text{ and } \quad||S - T|| \leq \dfrac{1}{n} \]
Also, for $1 \leq k \leq m$,
\[\tau(T(Q_1 + \cdots + Q_k)) = \tau(S(Q_1 + \cdots + Q_k)) \geq \tau(A (P_1 + \cdots P_k)) = \tau(B (P_1 + \cdots P_k))\]
 and hence, by lemma(\ref{atoMaj}) $B \prec T$. Since $B$ and $T$ have finite spectrum, there is a unitary $U$ so that $B = E(UTU^{*})$. We calculate,
\[||A - E(USU^{*})|| \leq ||A-B|| + ||B - E(UTU^{*}|| + ||E(UTU^{*} - USU^{*})|| \leq \dfrac{1}{n} + 0 + \dfrac{1}{n}\]
and see that $A$ can be arbitrarily well approximated by elements in $E(\mathcal{U}(S))$. Since $A$ was arbitrary, we have that the norm closure of $E(\mathcal{U}(S))$ equals $\{A \in \mathcal{A}^{+} \mid A \prec S\}$. 

\end{proof}

\section{Discussion}

The proofs given above can be easily adapted to masas in type $III$ factors that admit a faithful normal conditional expectation. Cartan masas, by definition satisfy this property, but not all masas do - By a result of Takesaki\cite{TakCon}, if every masa in a von Neumann algebra admits a normal conditional expectation, then it is finite. Suppose $\mathcal{A}$ is a masa in a type $III$ factor $\mathcal{M}$ admitting a normal conditional expectation $E : \mathcal{M} \rightarrow \mathcal{A}$. Let $A \in \mathcal{A}$ and $S$ be positive operators. For any self-adjoint operator $T$, let $\alpha(T) = \operatorname{min}(\{x \in \sigma(T)\}$. For any unitary $U$ in $\mathcal{M}$, we have that $||E(USU^{*})|| \leq ||S||$ and that $\alpha(E(USU^{*})) \geq \alpha(S)$. It is now easy to see that a necessary condition for the existence of an element $T \in \mathcal{O}(S)$ such that $E(T) = A$ is that $||A|| \leq ||S||$ and $\alpha(A) \geq \alpha(S)$.

 The Schur-Horn problem in type $III$ factors is more tractable that in the type $II_1$ case. Standard arguments allow us to prove the following lemma
\begin{lemma}\label{typeIIILem}
Let $S = \sum_{n=1}^{N} \mu_n F_n$ be a positive contraction with finite spectrum in a type $III$ factor $\mathcal{M}$ with $||S|| = 1$ and $\alpha(S) = 0$. Then, $\mathcal{O}(S)$ contains a non-trivial projection(and thus every projection). 
\end{lemma}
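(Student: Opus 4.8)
The plan is to reduce the statement to producing a single non-trivial projection and then approximating, and to build that approximation by stripping off the interior eigenvalues of $S$ one at a time using the infinite room in a type $III$ factor. I would organise it as follows.

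First, the parenthetical ``and thus every projection'' is the easy reduction and I would dispatch it up front. In a type $III$ factor every nonzero projection is equivalent to $I$, so given two non-trivial projections $P,Q$ we have $P\sim Q$, and since $I-P$ and $I-Q$ are also nonzero we have $I-P\sim I-Q$; adding partial isometries implementing these two equivalences produces a unitary $W$ with $WPW^{*}=Q$. Because $WUSU^{*}W^{*}=(WU)S(WU)^{*}$, the set $\mathcal{O}(S)$ is invariant under conjugation by every unitary, so it contains $Q$ as soon as it contains $P$. Hence it suffices to exhibit one non-trivial projection in $\mathcal{O}(S)$. I would also record the useful ``$\varepsilon$-to-exact'' step: if I can find unitaries $U_{k}$ and non-trivial projections $P_{k}$ with $\|U_{k}SU_{k}^{*}-P_{k}\|\to 0$, then fixing one non-trivial $P$ and choosing unitaries $W_{k}$ with $W_{k}P_{k}W_{k}^{*}=P$ gives $(W_{k}U_{k})S(W_{k}U_{k})^{*}\to P$ in norm, so $P\in\mathcal{O}(S)$.

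The main construction I would attempt by induction on the number $N$ of distinct eigenvalues, the goal being to show that $\mathcal{O}(S)$ contains an operator with one fewer distinct eigenvalue. When $N=2$ the hypotheses $\|S\|=1$ and $\alpha(S)=0$ force the spectrum to be $\{0,1\}$, so $S$ is itself a non-trivial projection and there is nothing to prove. For $N\geq 3$ there is an interior eigenvalue $\mu\in(0,1)$ with nonzero spectral projection $F$; since the extreme spectral projections for the values $1$ and $0$ are nonzero they are equivalent to $I$, and the idea would be to match subprojections of $F$ against subprojections of these two reservoirs and rotate, trying to drive the interior spectral mass toward the extremes $0$ and $1$ and thereby approximate, in $\mathcal{O}(S)$, an operator with one fewer distinct eigenvalue. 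Iterating down to $N=2$ would then deliver a projection.

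The hard part will be the norm convergence of this elimination step, and I expect it to be the genuine obstacle rather than a routine estimate. A unitary conjugation never alters the spectrum, so $USU^{*}$ always carries $\mu$ as an honest eigenvalue with a unit eigenvector $v$, and for any projection $P$ one has $\|USU^{*}-P\|\geq\|\mu v-Pv\|$, where $\|\mu v-Pv\|^{2}=\mu^{2}+(1-2\mu)\|Pv\|^{2}\geq\min(\mu,1-\mu)^{2}$; thus every point of the unitary orbit stays at operator-norm distance at least $\min(\mu,1-\mu)$ from the set of projections, and this lower bound passes to norm limits. Consequently the interior eigenvalue cannot simply be ``rotated away'' in operator norm while it is present, and the real content of the lemma is to show that the spectral mass at $\mu$ can nonetheless be made to disappear in the limit. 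This is exactly the step where the strength of the topology used in the definition of $\mathcal{O}(S)$ becomes decisive, and a correct argument must either supply a quantitative concentration estimate compatible with the norm or make precise in which sense the closure is taken; pinning this down is where I expect essentially all the difficulty to lie.
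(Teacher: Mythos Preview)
The paper supplies no proof here beyond the phrase ``standard arguments allow us to prove,'' so there is nothing to compare against. Your preliminary reductions are correct: in a type $III$ factor any two non-trivial projections are unitarily equivalent, $\mathcal{O}(S)$ is stable under unitary conjugation, and your $\varepsilon$-to-exact manoeuvre is valid, so one non-trivial projection in $\mathcal{O}(S)$ would indeed yield them all.

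The obstruction you isolate in your last paragraph is not a gap to be closed but a proof that the lemma, read with the paper's norm-closure definition of $\mathcal{O}(S)$, is \emph{false} whenever $N\geq 3$. Spectrum is invariant under unitary conjugation, and it is also stable under norm limits: if $T_n\to T$ in norm with $\mu\in\sigma(T_n)$ for every $n$ but $\mu\notin\sigma(T)$, a Neumann series built from $(T-\mu I)^{-1}$ would make $T_n-\mu I$ invertible for large $n$, a contradiction. Hence every element of the norm closure $\mathcal{O}(S)$ still carries each interior eigenvalue $\mu\in(0,1)$, and your estimate $\|T-P\|\geq\min(\mu,1-\mu)$ then excludes all projections. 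No ``quantitative concentration estimate compatible with the norm'' exists, and your induction on $N$ cannot be completed in operator norm. The statement becomes true, and your inductive elimination of interior eigenvalues goes through, if $\mathcal{O}(S)$ is taken in a weaker topology (say $\sigma$-strong$^{*}$): in a type $III$ factor one can arrange the spectral projection of an interior eigenvalue to tend strongly to $0$ while the extreme projections absorb its range. That is almost certainly what the authors have in mind in this informal discussion section; you should flag the discrepancy with the stated definition rather than continue to look for a norm-closure argument.
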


With this in hand, it is easy to see that if $A \in \mathcal{A}$ and $S \in \mathcal{M}$ are positive elements with finite spectrum so that $||A|| \leq ||S||$ and $\alpha(A) \geq \alpha(S)$, then we can solve the Schur-Horn problem for $A$ and $S$. There is further, a simple condition that allows us to determine when we can find a unitary so that $E(USU^{*}) = A$. 

Suppose $0$ is the point spectrum of $A$, so that there is a projection $P$ in $\mathcal{A}$ so that $PAP = 0$. Suppose we write $A = E(T)$ for some positive operator $T$, then, $E(PTP) = 0$ and hence, $PTP = 0$. Thus, $0$ must be in the point spectrum of $T$. If $A = E(USU^{*})$, we get that $0$ must be in the point spectrum of $USU^{*}$ and hence in the point spectrum of $S$. Similarly, if $1$ is in the point spectrum of a positive contraction $A$ and $A = E(USU^{*})$ for some positive contraction $S$ and a unitary $U$, then, $1$ must be in the point spectrum of $S$ as well. These necessary conditions are also sufficient. 

\begin{theorem}
Let $\mathcal{A}$ be a masa inside a type $III$ factor $\mathcal{M}$ admitting a faithful normal conditional expectation $E$ and let $A$ and $S$ be positive operators with finite spectrum in $\mathcal{A}$ and $\mathcal{M}$ respectively and let $E$ be the normal conditional expectation onto $\mathcal{A}$. Assume further that $\alpha(A) \geq \alpha(S)$ and $||A|| \leq ||S||$.
\begin{enumerate}
 \item There is an element $T \in \mathcal{O}(S)$ such that $E(T) = A$. 
 \item Assume additionally that if either $0$ and $||S||$ are in the point spectrum of $A$, then they are in the point spectrum of $S$ as well. Then, there is a unitary $U$ such that $E(U^{*} S U) = A$. 
\end{enumerate}
\end{theorem}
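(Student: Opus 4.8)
The plan is to transport the proof of Theorem~\ref{SHD} to the type $III$ setting, with two substitutions. First, a type $III$ factor carries no trace, so majorization collapses: the role played by $A\prec S$ in Section~3 is taken over by the pair of inequalities $\alpha(A)\geq\alpha(S)$ and $\|A\|\leq\|S\|$, which the remarks preceding the theorem already show to be necessary. Second, the scalar-lifting theorem of Section~2 is replaced by its type $III$ counterpart, and this is where Lemma~\ref{typeIIILem} and the proper infiniteness of type $III$ factors do the work. Part~(2) will be the refinement of part~(1) in which the point-spectrum hypotheses on $0$ and $\|S\|$ dispose of the degenerate cases.

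I would begin by normalizing. If $\alpha(S)=\|S\|$ then $S$ is scalar and the squeeze $\alpha(S)\leq\alpha(A)\leq\|A\|\leq\|S\|$ forces $A=S$, so $U=I$ works; otherwise apply the affine map $x\mapsto(x-\alpha(S))/(\|S\|-\alpha(S))$ to both $A$ and $S$. It carries $[\alpha(S),\|S\|]$ onto $[0,1]$, preserves positivity, finite spectrum, the two inequalities and the point-spectrum hypotheses of part~(2), and it transforms $\mathcal{O}(\cdot)$ and $E(\cdot)$ by the same affine map, so no generality is lost: we may assume $A$ and $S$ are positive contractions with $\|S\|=1$ and $\alpha(S)=0$.

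The base case is a type $III$ carpenter theorem: every positive contraction $B$ with finite spectrum in $\mathcal{A}$ is $E(P)$ for a projection $P\in\mathcal{M}$, non-trivial unless $B\in\{0,I\}$. As in Proposition~\ref{prop00}, cutting by the spectral projections of $B$ reduces this to lifting a single scalar $\lambda\in[0,1]$ to a projection inside a type $III$ factor. This is the analogue of Section~2, but it is much easier here: instead of the inductive halving of Lemma~\ref{simLem} one uses that in a type $III$ factor every non-trivial projection has the same unitary orbit (Lemma~\ref{typeIIILem}), together with the diffuseness of the masa, to realize an arbitrary $\lambda$ as $E(P)$. With this in hand, the two-point case for $S$ is the analogue of Proposition~\ref{prop0}: after the normalization above a two-point $S$ is itself a projection $F_1$, and the carpenter theorem gives a projection $P$ with $E(P)=A$, which lies in $\mathcal{O}(F_1)$ as soon as it is non-trivial; the only obstructions, $A=0$ and $A=I$, are exactly what the point-spectrum hypotheses rule out in part~(2).

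Finally I would run the induction of Theorem~\ref{SHD}. Since the (nonzero) spectral projections of $S$ are all equivalent to any prescribed nonzero projections of $\mathcal{A}$ summing to $I$, I may conjugate $S$ so that $A$ and $S$ are built from a common decreasing family of projections of $\mathcal{A}$; then one locates the first index at which the partial sums coming from $S$ cease to dominate those coming from $A$, peels off a two-point sub-block handled by the previous paragraph, and applies the inductive hypothesis to the remainder, which has one fewer break-point --- all with majorization replaced throughout by the $\alpha$ and $\|\cdot\|$ inequalities and with type $III$ versions of Lemmas~\ref{2ptMajCond} and~\ref{atoMaj}. The main obstacle, and the place where the type $III$ story departs from Section~3, is that $\mathcal{O}(S)$ is \emph{already} norm closed, so that $T\in\mathcal{O}(S)$ forces $\sigma(T)=\sigma(S)$ exactly: every value of $S$ must be consumed by $T$, and one must arrange the splitting so that this is compatible with $E(T)=A$. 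It is precisely the degenerate configurations in which $0$ or $\|S\|$ fails to be absorbed by $A$ that force the extra hypotheses of part~(2); getting this bookkeeping right, rather than any single analytic estimate, is the delicate step.
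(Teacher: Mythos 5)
The paper never writes this proof out: it says only that the details are ``a straightforward adaptation of the proof of theorem~(\ref{SHD})'' via Lemma~\ref{typeIIILem}, so your sketch is being measured against a declared strategy rather than an argument, and at the level of strategy you follow the same route (affine normalization, a type $III$ carpenter theorem as base case, then the induction of Theorem~\ref{SHD} with the trace inequalities replaced by the $\alpha$ and $\|\cdot\|$ conditions). The first genuine gap is your base case. Lemma~\ref{typeIIILem} is a statement about the unitary orbit of $S$: it tells you that once you have produced a non-trivial projection $P$ with $E(P)=\lambda I$, that $P$ already lies in $\mathcal{O}(S)$; it says nothing about how to produce such a $P$. ``All non-trivial projections lie in one orbit, plus diffuseness of the masa'' is not a proof of the carpenter theorem --- knowing that $\{E(UFU^{*}): U\in\mathcal{U}(\mathcal{M})\}$ is the image of a single orbit tells you nothing about which elements of $\mathcal{A}$ it contains. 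What is actually needed is the rotation-and-iteration argument of Section~2 rerun with a faithful normal state $\varphi_{0}\circ E$ in place of the trace to control the convergence of the $P_{k}$; the type $III$ setting helps only in that the auxiliary projections of Lemma~\ref{simLem} may be chosen of arbitrary size in $\mathcal{A}$, all non-zero projections of $\mathcal{M}$ being equivalent. That iteration is the real content of the theorem and it is absent from your proposal.

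The ``delicate bookkeeping'' you flag at the end is not merely delicate; as the theorem is stated it cannot be completed, because the stated hypotheses are insufficient. Since $A$ and $S$ have finite spectrum, every spectral value is an eigenvalue, so the additional hypothesis of part~(2) already follows from $\alpha(A)\geq\alpha(S)$ and $\|A\|\leq\|S\|$ and adds nothing. Now take $A=E_{1}$ a non-trivial projection of $\mathcal{A}$ and $S=F_{1}+\frac{1}{2}F_{2}$ with $F_{1},F_{2},F_{3}$ non-zero projections summing to $I$: every hypothesis of both parts holds, but any $T\in\mathcal{O}(S)$ has $\sigma(T)=\{0,\frac{1}{2},1\}$ with all three spectral projections non-zero (for finite-spectrum operators in a type $III$ factor the norm closure adds nothing to the unitary orbit, exactly as you observe), while $E(T)=E_{1}$ together with $0\leq T\leq I$ and faithfulness of $E$ forces $TE_{1}=E_{1}$ and $T(I-E_{1})=0$, hence $T=E_{1}$, whose spectrum is $\{0,1\}$. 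So neither part holds for this pair. The obstruction is precisely the one you describe --- the extreme eigenvalues of $A$ pin $T$ down on $E_{A}(\{\|S\|\})$ and $E_{A}(\{\alpha(S)\})$, leaving nowhere for the intermediate values of $S$ to go --- and a correct statement needs an extra hypothesis, e.g.\ that $A$ have a spectral value in the open interval $(\alpha(S),\|S\|)$ whenever $S$ has more than two spectral values. In the strict case $\alpha(A)>\alpha(S)$, $\|A\|<\|S\|$ your plan does go through, and more simply than you suggest: give each spectral projection $E_{n}$ of $A$ a non-zero piece of every $F_{m}$ and apply the carpenter theorem in each corner $E_{n}\mathcal{M}E_{n}$; no analogue of the length-$L$ induction of Theorem~\ref{SHD} is required once the base case is in hand.
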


We omit the details as they are a straightforward adaptation of the proof of theorem(\ref{SHD}). 

 In general, we could ask,
\begin{question}
 Let $A$ and $S$ be positive operators in $\mathcal{A}$ and $\mathcal{M}$ respectively, where $\mathcal{A}$ is a masa inside a type $III$ factor admitting a normal conditional expectation and so that $||A|| \leq ||S||$ and $\alpha(A) \geq \alpha(S)$. Then, is there an element $T$ in $\mathcal{O}(S)$ so that $E(T) = A$?
\end{question}

Lyapunov's theorem\cite{LyapOr}, which states that the range of any non-atomic vector valued measure taking values in $\mathbb{C}^{n}$ is compact and convex, was reformulated in operator algebraic language by Lindenstrauss\cite{LinLya} to say the following: Let $\Phi$ be a weak* continuous linear map from a non-atomic abelian von Neumann algebra into $\mathbb{C}^{n}$. Then, for any positive contraction $A$, there is a projection $P$ such that $\Phi(A) = \Phi(P)$. Anderson and Akemann, in their superb monograph\cite{LyapAA}, 
called any theorem concerning linear maps $\Phi: \mathcal{X} \rightarrow \mathcal{Y}$ where $\mathcal{X}$ and $\mathcal{Y}$ are subsets of linear spaces, that assures us that $\operatorname{Ran}(\Phi) = \operatorname{Ran}(\Phi\mid \partial(\mathcal{X}))$ a Lyapunov type theorem. Clearly, Kadison's carpenter problem is a Lyapunov type problem. Anderson and Akemann proved a variety of Lyapunov theorems and showed, quite surprisingly, that Lyapunov theorems are substantially more tractable when the maps considered are singular. The one of most interest to us is
\begin{theorem}[Anderson and Akemann]
Let $\mathcal{A}$ be a masa in an type $II_1$ factor $\mathcal{M}$. Let $F$ be a singular conditional expectation from $\mathcal{M}$ to $\mathcal{A}$. Then every positive contraction can be lifted to a projection $P$ under $F$. 
\end{theorem}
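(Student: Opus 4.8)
The plan is to run the same kind of iteration as in the proof of the scalar lifting theorem of Section~2 --- construct a sequence of projections in $\mathcal{M}$ that fixes more and more of $A$, and take a strong limit --- but to use the singularity of $F$ to supply the flexibility that is absent in the normal case. Two features of the general (normal) problem fail us: one cannot in general lift even a scalar $t\,I$ without passing to a norm limit, and one cannot reach a positive contraction whose spectrum is uncountable by the corner-by-corner trick of Proposition~\ref{prop00}. Singularity repairs both. I would first dispatch the finite-spectrum case for an \emph{arbitrary} singular conditional expectation on an arbitrary $II_1$ factor with a masa: the reduction to the scalar case is as in Proposition~\ref{prop00}, and the scalar case is proved like the scalar lifting theorem, with the trace bookkeeping there replaced by the structural consequences of singularity described next. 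The passage from finite to arbitrary spectrum is then by approximation and interlacing.

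The heart of the matter is a ``room at infinity'' lemma that isolates what singularity gives: for a singular conditional expectation $F\colon\mathcal{M}\to\mathcal{A}$, every positive contraction $b\in\mathcal{A}$ and every $\varepsilon>0$, there is a projection $e\in\mathcal{M}$ with $F(e)=b$ \emph{and} $\tau(e)<\varepsilon$ --- something outright impossible when $F$ is normal, where necessarily $\tau(e)=\tau(F(e))=\tau(b)$. The mechanism is the standard description of singular maps: since $F$ has no normal part, there is a net of projections $q_\lambda\uparrow I$ in $\mathcal{M}$ with $F(q_\lambda)\to 0$, so the complementary projections $I-q_\lambda$ have arbitrarily small trace while $F$ still attains essentially its full range on the corners $(I-q_\lambda)\mathcal{M}(I-q_\lambda)$; working inside such a corner, and using the classical Lyapunov theorem recalled above to split $b$ into ``layers'', one assembles the desired $e$. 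Granting the lemma, the remaining construction is routine: pick an increasing sequence of finite-spectrum positive contractions $A_n$ in $\mathcal{A}$ with $\|A-A_n\|\to 0$, use the finite-spectrum case to start and the lemma to continue --- at step $n$ lift the (small, positive) correction $A_{n+1}-A_n$ to a projection of tiny trace living in an $F$-invisible corner and adjoin it --- so that $F(P_n)=A_n$ with $\|P_{n+1}-P_n\|_2$ summable; the strong limit $P=\lim P_n$ is then a projection, and $F(P)=A$ because $\|F(P_n)-A\|=\|A_n-A\|\to 0$ in norm and $F$ is norm continuous (so the non-normality of $F$ never intrudes).

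The main obstacle is precisely the room lemma. It is essentially the theorem itself with an extra trace bound, so it cannot be quoted but must be proved by one construction that controls the $F$-image and the trace at once; and the subtle point is that the projections $q_\lambda$ witnessing singularity are genuinely off-diagonal --- they cannot be chosen inside $\mathcal{A}$, where $F$ is the identity --- so $F$ restricted to $(I-q_\lambda)\mathcal{M}(I-q_\lambda)$ is \emph{not} a conditional expectation onto a masa of that corner, which blocks a naive recursion. Making the Lyapunov splitting of the layers of $b$ mesh with this off-diagonal geometry, and then arranging the countably many resulting pieces so that their partial sums converge strongly to an honest projection, is where the real work lies. It is also exactly this abundance of off-diagonal projections that makes the singular case go through while the normal carpenter problem remains open.
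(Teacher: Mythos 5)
First, a point of order: the paper does not prove this statement. It is quoted verbatim as a theorem of Anderson and Akemann, with the proof living in their memoir \cite{LyapAA}; there is no internal argument to compare yours against. Judged on its own terms, your proposal is an architecture rather than a proof, and the gap is the one you yourself name: everything is made to rest on the ``room at infinity'' lemma (for every positive contraction $b\in\mathcal{A}$ and $\varepsilon>0$ a projection $e$ with $F(e)=b$ and $\tau(e)<\varepsilon$), which you concede is essentially the theorem plus a trace bound and for which you offer only a heuristic. Since the entire interlacing scheme --- lifting the corrections $A_{n+1}-A_n$ into $F$-invisible corners with summable $\|P_{n+1}-P_n\|_2$ --- is powered by that lemma, nothing in the write-up actually closes.

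Two concrete points where the heuristic itself is shaky. First, if $q_\lambda\uparrow I$ is an increasing net of projections, then $F(q_\lambda)$ is increasing, so ``$F(q_\lambda)\to 0$'' forces $F(q_\lambda)=0$ for every $\lambda$; producing such a net from singularity of the \emph{map} $F$ (rather than of a single state $\omega\circ F$, where the standard local-annihilation characterization applies) already requires an argument you do not supply, and it is not obviously compatible with faithfulness of $F$. Second, even granting the net, you correctly observe that $F$ restricted to the corner $(I-q_\lambda)\mathcal{M}(I-q_\lambda)$ is not a conditional expectation onto a masa of that corner --- the corner may meet $\mathcal{A}$ trivially --- so the recursion that works in Section 2 (always re-expectation onto a masa of a smaller $II_1$ factor) has no analogue here; you flag this as ``where the real work lies'' without doing it. The Anderson--Akemann proof does exploit exactly the phenomenon you identify (singular maps admit exact, not approximate, Lyapunov theorems because they ignore projections that are small in a suitable sense), but their execution goes through a developed theory of singular maps and maximality arguments that your sketch does not reconstruct. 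As it stands, the proposal identifies the right phenomenon but proves neither the key lemma nor the theorem.
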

There are plenty of singular conditional expectations onto masas in $II_1$ factors\cite{AkeShe}, though none of them are trace preserving. The corresponding Schur-Horn problem cannot be any other than
\begin{question}
Let $\mathcal{A}$ be a masa in an type $II_1$ factor $\mathcal{M}$. Let $F$ be a singular conditional expectation from $\mathcal{M}$ to $\mathcal{A}$.  Suppose $A \in \mathcal{A}$ and $S \in \mathcal{M}$ positive contractions that are not multiples of the identity such that $||A|| \leq ||S||$ and $\alpha(A) \geq \alpha(S)$. Then, is there an element $T \in \mathcal{O}(S)$ such that $F(T) = A$?  
\end{question}
Finally, an answer to the following related question, which we are unable to solve, should help in solving the Schur-Horn and carpenter problems in type $II_1$ factors.
\begin{question}
 Let $A$ be a positive operator in a masa $\mathcal{A}$ inside a $II_1$ factor $\mathcal{M}$. Then, does the norm closure of $\mathfrak{L}(A) = \{S \in \mathcal{M} \mid \exists \, T \in \mathcal{O}(S) \text{ so that } E(T) = A\}$ equal  $\{S \in \mathcal{M} \mid A \prec S\}$? Is $\mathfrak{L}(A)$ convex?
\end{question}

\subsection{Acknowledgements}
The authors would like to thank Sabanci University for a research grant that supported the visit of the first author to Sabanci University, Istanbul in September 2011, when part of this work was done. The second author would also like to thank Matt Daws for pointing out on Mathoverflow, a result of Takesaki that is mentioned in the last section.

\begin{multicols}{2}

\noindent {\bf B V Rajaram Bhat},\\
Indian Statistical Institute,\\
R V College Post,\\
 Bangalore-560059, India.\\
bhat@isibang.ac.in

\noindent {\bf Mohan Ravichandran},\\
Faculty of Engineering and Natural Sciences,\\
Sabanci University, Orhanli, Tuzla,\\
Istanbul, Turkey - 34956\\
mohanr@sabanciuniv.edu

\end{multicols}

\end{document}